\def\blfootnote{\xdef\@thefnmark{}\@footnotetext}
\newtheorem{theorem}{Theorem}[section]
\newtheorem{lemma}[theorem]{Lemma}
\newtheorem{proposition}[theorem]{Proposition}
\newtheorem{corollary}[theorem]{Corollary}
\theoremstyle{definition}
\newtheorem{remark}[theorem]{Remark}
\newtheorem*{definition*}{Definition}
\newcommand{\N}{\mathbb N}
\newcommand{\f}{\varphi}
\renewcommand{\geq}{\geqslant}
\renewcommand{\leq}{\leqslant}
\newcommand{\ed} {\end{document}}
\let\leq=\leqslant
\let\geq=\geqslant
\numberwithin{equation}{section}
\begin{document}
\title{Strong conciseness of Engel words\\ in profinite groups}

\author{E. I. Khukhro}
\address{Charlotte Scott Research Centre for Algebra, University of Lincoln, U.K.}
\email{khukhro@yahoo.co.uk}

\author{P. Shumyatsky}

\address{Department of Mathematics, University of Brasilia, DF~70910-900, Brazil}
\email{pavel@unb.br}

\keywords{Profinite groups; pro-$p$ groups; finite groups; Lie ring method; Engel word; strongly concise word}
\subjclass[2010]{20E18, 20F19, 20F45}

\begin{abstract}
A group word $w$ is said to be strongly concise in a class $\mathscr C$ of profinite groups if, for any group $G$ in $\mathscr C$, either $w$ takes at least continuum values in $G$ or  the verbal subgroup $w(G)$ is finite. It is conjectured that all words are strongly concise in the class of all profinite groups. Earlier  Detomi, Klopsch, and Shumyatsky proved this conjecture for multilinear commutator
words, as well as for some other particular words. They also proved that every group word is strongly concise in the class of nilpotent profinite groups. In the present paper we prove that for any $n$ the $n$-Engel word $[...[x,y],y],\dots y]$ (where $y$ is repeated $n$ times) is strongly concise in the class of finitely generated profinite groups.
\end{abstract}
\maketitle

\section{Introduction}
A group word $w$ is said to be \emph{strongly concise} in a class $\mathscr C$ of profinite groups if, for any group $G$ in $\mathscr C$, either $w$ takes at least continuum values in $G$ or  the verbal subgroup $w(G)$ is finite.
It is conjectured that all words are strongly concise in the class of all profinite groups.  Earlier Detomi, Klopsch, and Shumyatsky \cite{dks} proved this conjecture for multilinear commutator
words.  They also proved that every group word is strongly concise in the class of nilpotent profinite groups.
In the present paper we prove that for any $n$ the $n$-Engel word $[x,{}_ny]$  is strongly concise in the class of finitely generated profinite groups. Henceforth, we use the left-normed simple commutator notation
$[a_1,a_2,a_3,\dots ,a_r]:=[...[[a_1,a_2],a_3],\dots ,a_r]$ and the abbreviation $[a,{}_kb]:=[a,b,b,\dots, b]$ where $b$ is repeated $k$ times.

\begin{theorem}
\label{t-main}
For any $n$, the $n$-Engel word $[x,{}_ny]$ is strongly concise in the class of finitely generated profinite groups.
\end{theorem}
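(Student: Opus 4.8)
The plan is to suppose that $w:=[x,{}_ny]$ takes fewer than $2^{\aleph_0}$ values in $G$ and to prove that the verbal subgroup $w(G)$ is finite. The first move is to reduce to the case where $w$ takes only finitely many values: since $G$ is finitely generated it is countably based, so the value set $w\{G\}$ is a compact metrizable subset of $G$, hence countable by Cantor--Bendixson, and the topological argument of \cite{dks} shows that a countably infinite set of word values in a profinite group already forces $2^{\aleph_0}$ of them; thus I may assume $|w\{G\}|=m<\infty$. Two elementary consequences will be used throughout: $w(G)$ is topologically generated by these $m$ values, and, since a conjugate of a $w$-value is again a $w$-value, each value has at most $m$ conjugates in $G$. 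Finally, $G/w(G)$ satisfies $[x,{}_ny]=1$, so it is a finitely generated profinite $n$-Engel group; here the Lie ring method enters, and by the Wilson--Zelmanov theorem (via nilpotency of finitely generated Engel Lie algebras) it is nilpotent, of class $c$ say, whence $\gamma_{c+1}(G)\le w(G)$. This is the only step where finite generation of $G$ is genuinely needed.

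Next I would reduce to $w(G)$ abelian. The subgroup $C:=C_G(w(G))$ is the intersection of the at most $m$ centralizers of the values, each of index $\le m$, so $|G:C|\le m^m$; hence $Z(w(G))$ has index $\le m^m$ in $w(G)$, and by Schur's theorem $[w(G),w(G)]$ is finite. It is topologically characteristic in $G$, so after factoring it out I may assume --- retaining finite generation, at most $m$ values, nilpotency of $G/w(G)$, and $|G:C_G(w(G))|\le m^m$ --- that $A:=w(G)$ is abelian.

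The heart of the argument is then to bound the primary components of $A$. Write $A=\prod_pA_p$ for the decomposition into pro-$p$ Sylow subgroups; each $A_p$ is a finitely generated $\mathbb Z_p$-module, $A_p\cong\mathbb Z_p^{a_p}\oplus F_p$ with $F_p$ finite. Since $A$ is abelian, for every $y\in G$ one has $[a,{}_ny]=(\bar y-1)^na$ for $a\in A$, where $\bar y$ is the conjugation action written additively; hence $(\bar y-1)^nA=\{[a,{}_ny]:a\in A\}\subseteq w\{G\}$ has at most $m$ elements. Projecting onto $A_p/F_p\cong\mathbb Z_p^{a_p}$ gives a $\mathbb Z_p$-linear map with finite image on a torsion-free module, hence the zero map, so $\bar y$ acts unipotently there; being of finite order and in characteristic $0$ it acts trivially. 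Therefore $[G,A_p]\le F_p$ for every $p$, so $[G,A]\le T:=\prod_pF_p$; thus $A/T$ is central in $G/T$, the group $G/T$ is (central)-by-nilpotent hence nilpotent, and by the theorem of \cite{dks} on nilpotent profinite groups $w(G/T)=A/T$ is finite, hence trivial as it is torsion-free --- so each $A_p$ is finite. Now for each prime $p>m^m$ the same projection shows $(\bar y-1)^nA_p$ is a $p$-subgroup of order $\le m<p$, hence trivial, so $\bar y$ is unipotent of $p$-power order while of order $\le m^m<p$, hence trivial; thus $A_p$ is central in $G$ for all but finitely many $p$. Applying the previous reasoning modulo the finite characteristic subgroup $\prod_{p\le m^m}A_p$, the quotient $G/\prod_{p\le m^m}A_p$ is nilpotent, so by \cite{dks} again its verbal subgroup $\prod_{p>m^m}A_p$ is finite. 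Hence only finitely many $A_p$ are nontrivial, each is finite, and $A=w(G)$ is finite, as required.

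As for where the difficulty lies: the topological reduction to finitely many values and the Lie-theoretic input that $G/w(G)$ is nilpotent are both essential, but they are (deep) citations; the new work is everything after one knows $G/w(G)$ is nilpotent. The subtlety is that $w(G)$ itself need not be central and $G$ need not be nilpotent, so the nilpotent case of \cite{dks} cannot be quoted directly. The centralizer-plus-Schur reduction to the abelian case, followed by the prime-by-prime analysis --- killing the torsion-free part, then the large primes, then invoking \cite{dks} twice on suitable nilpotent quotients --- is the crux, and the place where the hypothesis that $w$ takes few values must be exploited repeatedly and quantitatively; one also has to check that the successive factorings (of $[w(G),w(G)]$, of $T$, of $\prod_{p\le m^m}A_p$) are by topologically characteristic subgroups so that finite generation and the bound on the number of values persist.
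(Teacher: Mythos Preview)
Your argument has a genuine gap at the very first step, and unfortunately it is load-bearing.

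The Cantor--Bendixson reduction is fine: a finitely generated profinite group is countably based, hence metrizable, so a closed subset of cardinality strictly less than $2^{\aleph_0}$ is countable. But the next sentence --- ``the topological argument of \cite{dks} shows that a countably infinite set of word values in a profinite group already forces $2^{\aleph_0}$ of them'' --- is not a result of \cite{dks}. What \cite{dks} provides is Proposition~\ref{pr-dks} (a fibre is constant on an open set) and Lemma~\ref{l-c} (small conjugacy classes are finite); neither yields a general ``countable $\Rightarrow$ finite'' principle for word-value sets. Indeed, proving that countably many $n$-Engel values implies finitely many is essentially the theorem itself. The natural bridge from ``countably many $n$-Engel values'' is: every element then has a countable Engel sink, so by Theorem~\ref{t4.1} there is a finite normal $N$ with $G/N$ locally nilpotent --- but that is precisely the paper's route (Theorem~\ref{t1}), and once you are there, \cite[Theorem~1.2]{dks} on nilpotent profinite groups finishes immediately, rendering your subsequent analysis superfluous.

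The reason the gap cannot simply be worked around is that the number $m$ is used quantitatively throughout. The bound $|G:C_G(w(G))|\le m^m$ drives both the Schur reduction to abelian $w(G)$ and the uniform bound on the order of $\bar y$ needed for the large-prime step; with only countably many values one still gets (via Lemma~\ref{l-c}) that each individual value has finitely many conjugates, but the intersection of countably many finite-index centralizers need not have finite index, so neither reduction goes through. One small piece does survive without $m$: for $a\in A$ the set $(\bar y-1)^nA=\{[a,{}_ny]:a\in A\}$ is a closed subgroup with fewer than $2^{\aleph_0}$ elements, hence finite --- but this alone is not enough to run your unipotency arguments uniformly in $p$.

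If one \emph{grants} the reduction to finitely many values, the rest of your proof is correct and rather elegant: the Schur step, the torsion-free/unipotent analysis on $A_p/F_p$, and the large-prime cutoff combine nicely, and the two appeals to \cite{dks} on genuinely nilpotent quotients are legitimate. It is a more hands-on alternative to the paper's approach, which instead proves the structural Theorem~\ref{t1} for \emph{all} profinite groups (via Lie methods for pro-$p$ groups, coprime-action lemmas for the prosoluble case, and an induction on nonprosoluble length in general) and only uses finite generation at the very end. But as written, your argument is circular at its first move.
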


It is not yet clear if the $n$-Engel word is strongly concise in the class of all profinite groups.

By the Detomi--Klopsch--Shumyatsky theorem \cite[Theorem~1.2]{dks} on strong  conciseness of all words in the class of nilpotent profinite groups, Theorem~\ref{t-main} is an immediate consequence of the following result.

\begin{theorem}
\label{t1}
Let $n$ be a positive integer, and suppose that $G$ is a 
profinite group in which the word $[x,{}_ny]$  has strictly less than $2^{\aleph_0}$ values. Then $G$ has a finite normal subgroup $N$ such that $G/N$ is locally nilpotent.
\end{theorem}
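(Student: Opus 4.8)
The plan is to reduce the statement to a pro-$p$ situation and then exploit the Lie-ring method together with Zelmanov's solution of the restricted Burnside problem. The first step is a standard observation: since the set of $[x,{}_ny]$-values has cardinality $<2^{\aleph_0}$, and a profinite group that is not finite has continuum many elements, the verbal subgroup $H=\langle [x,{}_ny]^G\rangle$ is topologically generated by a set of $<2^{\aleph_0}$ elements; more importantly, by a Baire-category / counting argument (of the type already used in \cite{dks}) one shows that some open subgroup already sees only countably many Engel values, so one may assume $G$ is \emph{finitely generated} (this is our hypothesis) and that the closure of the set of Engel values is ``small'' in a way that forces it to lie in a subgroup of infinite index unless it is finite. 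Concretely, I would first prove that it suffices to treat the case where $G$ is a finitely generated pro-$p$ group for a single prime $p$: a finitely generated profinite group has only finitely many open subgroups of each index, hence finitely many maximal open normal subgroups with quotient of a given order, and by passing to Sylow subgroups of finite quotients and taking inverse limits one localizes the problem one prime at a time, using that a finite normal subgroup can be assembled from its $p$-components.

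The heart of the argument is then the pro-$p$ case. Here I would argue by contradiction: suppose $G$ is a finitely generated pro-$p$ group, the word $[x,{}_ny]$ has $<2^{\aleph_0}$ values, but $G$ has no finite normal subgroup $N$ with $G/N$ locally nilpotent. The key point is that the set $W$ of Engel values, being of cardinality $<2^{\aleph_0}$ in a pro-$p$ group, must be contained in a proper open subgroup — indeed in \emph{countably many} cosets of any open subgroup is impossible unless $W$ is contained in finitely many, because otherwise one builds a perfect set of values by a branching/independence argument in the inverse limit. I would formalize this as: there is an open normal subgroup $K\trianglelefteq G$ such that $W\subseteq K$, and then $[G,{}_nG]\le \overline{\langle W\rangle}\le K$, so $G/K$ is $n$-Engel; but a finite $n$-Engel group, by Zelmanov's theorem (restricted Burnside), is nilpotent of bounded class, and running this over all finite quotients of the finitely generated pro-$p$ group $G$ shows $G$ is itself nilpotent — contradicting the assumption that $G$ is not locally nilpotent (taking $N=1$). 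The subtlety is that $\overline{\langle W\rangle}$ need not be normal or closed of the right form, so instead one works with the finite quotients $G/M$ ($M$ open normal): in each, the image of $W$ has cardinality $<2^{\aleph_0}$, hence the image is a genuinely small (in fact the whole point is: if $W$ itself is infinite we must produce continuum many values).

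The main obstacle — and where the real work lies — is exactly this combinatorial-topological dichotomy: showing that in a finitely generated profinite (or pro-$p$) group, a set of word values of cardinality $<2^{\aleph_0}$ is forced to be ``almost contained in a subgroup'' in a manner strong enough that $w(G)$ is finite. The way I would attack it is to use the structure of the inverse system: write $G=\varprojlim G/M_i$ with $M_i$ a descending chain of open normal subgroups with trivial intersection; if for infinitely many $i$ the image of the Engel-value set in $G/M_i$ grows, one selects at each level a value that splits off a new generator of the image of $w(G)$, and patches these choices along the tree of the inverse limit to manufacture a continuum of distinct Engel values (this is the Cantor-set construction, and it is where finite generation is used, to keep the tree finitely branching yet infinitely deep). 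Once one knows $w(G)$ is topologically finitely generated and that each finite quotient of $G$ is $n$-Engel-by-(bounded order) — wait, more precisely that $W$ lies in an open normal $K$ — the Zelmanov input closes the argument: $G/K$ finite $n$-Engel implies $G/K$ nilpotent of $n$-bounded class $c=c(n)$, so $\gamma_{c+1}(G)\le K$ for that particular $K$, and since this holds for a cofinal family of $K$'s one gets $\gamma_{c+1}(G)$ finite; then $N=\gamma_{c+1}(G)$ is the desired finite normal subgroup with $G/N$ nilpotent, hence locally nilpotent.
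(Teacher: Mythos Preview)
Your proposal has several genuine gaps.

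First, Theorem~\ref{t1} does \emph{not} assume that $G$ is finitely generated; that is the hypothesis of Theorem~\ref{t-main}, and the whole point of Theorem~\ref{t1} is to supply a reduction valid for arbitrary profinite groups (Theorem~\ref{t-main} is then deduced from Theorem~\ref{t1} combined with the Detomi--Klopsch--Shumyatsky result for nilpotent profinite groups). So the parenthetical ``this is our hypothesis'' is a misreading, and all the steps where you rely on finite generation (the finitely-branching inverse-limit tree, the Cantor-set construction, the claim that there are only finitely many open subgroups of each index) are not available.

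Second, your reduction to the pro-$p$ case does not work. A profinite group that is not pronilpotent does not decompose as a product of its Sylow subgroups, and one cannot ``localize the problem one prime at a time'' by looking at Sylow subgroups of finite quotients and taking inverse limits; there is no mechanism for reassembling a finite normal subgroup of $G$ from $p$-components in that situation. The paper's route is quite different: after the pronilpotent case it treats coprime actions, then prosoluble groups via Fitting height and Hall--Higman-type arguments, and only then handles the general case by bounding the nonprosoluble length of $G$ (using Wilson's theorem) and inducting on that length. None of this structure is present in your plan.

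Third, even within the pro-$p$ case your closing step fails. You assert that ``a finite $n$-Engel group, by Zelmanov's theorem (restricted Burnside), is nilpotent of bounded class $c=c(n)$''. This is not what the restricted Burnside theorem says, and no such bound is known: finite $n$-Engel groups are nilpotent (Zorn), but a uniform bound on the class in terms of $n$ alone --- or even in terms of $n$, $p$, and the number of generators --- would amount to proving that every finitely generated $n$-Engel pro-$p$ group is nilpotent, which is open for $n\ge 5$. Hence your conclusion ``since this holds for a cofinal family of $K$'s one gets $\gamma_{c+1}(G)$ finite'' has no $c$ to work with. Moreover, the earlier step --- that the set $W$ of Engel values lies in some open normal $K$, and that this can be arranged for a cofinal family of $K$'s --- is asserted but not argued; the branching construction you sketch would at best give one such $K$, not a cofinal family, and even that is not carried out.
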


Indeed, let $G$ be a  finitely generated profinite group in which   the word $[x,{}_ny]$ has strictly less than $2^{\aleph_0}$ values. Assuming that Theorem~\ref{t1} holds, we obtain a finite normal subgroup $N$ of $G$ such that $G/N$ is locally nilpotent. Since $G$ is finitely generated, $G/N$ is nilpotent. By \cite[Theorem~1.2]{dks} the verbal subgroup $\langle [x,{}_ny]\mid x,y\in G/N\rangle$ is finite, and since $N$ is finite, the verbal subgroup $\langle [x,{}_ny]\mid x,y\in G\rangle$ is also finite.

It is therefore the proof of Theorem~\ref{t1} that occupies the rest of the paper. Much of the technique used in this proof was developed by the authors earlier for studying profinite (and more generally compact) groups with finite or countable Engel sinks.  An \textit{Engel sink} of an element $g$ of a group $G$ is a set ${\mathscr E}(g)$ such that for every $x\in G$ all sufficiently long commutators $[x,g,g,\dots ,g]$ belong to ${\mathscr E}(g)$, that is, for every $x\in G$ there is a positive integer $n(x,g)$ such that
 $[x,{}_{n}g]\in {\mathscr E}(g)$ for all $n\geq n(x,g)$. (Thus, $g$ is an Engel element precisely when we can choose ${\mathscr E}(g)=\{ 1\}$, and $G$ is an Engel group when we can choose ${\mathscr E}(g)=\{ 1\}$ for all $g\in G$.) In \cite{khu-shu, khu-shu191} we considered  finite, profinite, and compact groups in which  every element has a finite or countable Engel sink  and proved the following theorem (the version with finite sinks was proved in \cite[Theorem~1.1]{khu-shu}).

\begin{theorem}[{\cite[Theorem~1.2]{khu-shu191}}]\label{t4.1}
If every element of a compact group $G$ has a countable Engel sink, then $G$ has a finite normal subgroup $N$ such that $G/N$ is locally nilpotent.
\end{theorem}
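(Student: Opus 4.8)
The plan is to reduce to the profinite case and then, modulo a finite normal subgroup, to extract a genuine Engel condition to which Lie-theoretic machinery applies. First I would analyse the identity component $G^0$. Writing $G^0$ as an inverse limit $\varprojlim L_i$ of compact connected Lie groups and noting that continuous quotients preserve the countable-sink hypothesis, I would argue that no nonabelian compact connected Lie group can have all its Engel sinks countable: in a compact semisimple group one finds an element $g$ (for instance a generic element of a subgroup locally isomorphic to $\mathrm{SU}(2)$) for which the iterated commutator map $x\mapsto [x,{}_ng]$ has uncountably many accumulation values, forcing $\mathscr{E}(g)$ to be uncountable. Hence $G^0$ is abelian, so every element of $G^0$ is an Engel element, and the problem reduces to understanding the profinite group $G/G^0$ together with the action on $G^0$; I would defer the gluing of the connected part to the end.

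For $G$ profinite the aim is to produce a finite normal subgroup $N$ such that every element of $G/N$ is an Engel element, whence local nilpotency follows from the Wilson--Zelmanov theorem that a profinite Engel group is locally nilpotent. The obstruction to local nilpotency is carried by the nilpotent residual $\gamma_\infty(G)=\varprojlim\gamma_\infty(G/U)$ over open normal subgroups $U$, and the crux is to show that it is finite. This is where countability enters, through the Baire category theorem: since $G$ is compact, hence a Baire space, and each sink $\mathscr{E}(g)$ is countable, the sets $A_m=\{x: [x,{}_ng]\in\mathscr{E}(g)\ \text{for all}\ n\ge m\}$ cover $G$, so some $A_m$ is non-meager and therefore comeager on a coset of an open subgroup. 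I would use this to convert the merely countable, non-uniform sink data into \emph{uniform} bounds on the relevant invariants of the finite quotients $G/U$.

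At the level of finite quotients I would invoke the structure theory for finite groups with small Engel sinks. Using the classification of finite simple groups one shows that a nonabelian simple group contains an element whose Engel sink grows with the order of the group; hence a uniform bound on sinks bounds the nonabelian composition factors, and standard arguments on the generalized Fitting subgroup then bound $|\gamma_\infty(G/U)|$ uniformly. Passing to the inverse limit makes $\gamma_\infty(G)$ finite, and I set $N=\gamma_\infty(G)$. To upgrade $G/N$ from pronilpotent to locally nilpotent I would pass to the associated Lie rings of its pro-$p$ Sylow factors and apply Zelmanov's theorem on Engel Lie algebras, the sink data translating into ad-nilpotency of the generators, with Wilson--Zelmanov supplying the final step. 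Lastly I would glue back $G^0$: since $G^0$ is compact connected abelian and the image of $G$ in $\mathrm{Aut}(G^0)$ is controlled by the same conditions, the action is trivial modulo a finite subgroup, and enlarging $N$ by this finite piece keeps $N$ finite and $G/N$ locally nilpotent.

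The hard part is precisely the passage from countable to uniform. A countable sink gives, in each finite quotient, only a finite sink, a priori with sizes that could grow without bound along the inverse system; the entire argument hinges on using Baire category in the compact group to force these sizes to be uniformly bounded, so that the finite-group theorems and, through them, the Lie-ring and Zelmanov machinery can be applied with a single bound. Making this uniformity argument interact correctly with the inverse-limit structure, while simultaneously ruling out large nonabelian simple sections via the classification, is the technical heart of the proof; by comparison the connected reduction and the final assembly of $N$ are routine.
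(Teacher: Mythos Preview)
This theorem is not proved in the present paper; it is quoted from \cite{khu-shu191} and used as a black box. There is therefore no ``paper's own proof'' here to compare against. That said, the introduction explicitly states that the proof of Theorem~\ref{t1} in the present paper ``is in many respects similar to the proof of Theorem~\ref{t4.1} in \cite{khu-shu191}'', and then outlines that common architecture: first pro-$p$ groups via the associated Lie algebra $L_p(G)$ and Zelmanov's theorem, then pronilpotent groups, then prosoluble groups via coprime actions and a Hall--Higman--type argument, and finally arbitrary profinite groups by bounding the nonsoluble length and inducting on it. The compact (non-profinite) case is a separate reduction.

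Your sketch touches several of the right ingredients (Baire category, Wilson--Zelmanov, Zelmanov's Lie theorem, the classification), but the organisation is off and there is a real gap at the core. You propose to use Baire category to extract, for each $g$, a coset on which the sink behaviour is uniform, and then to parlay this into a \emph{uniform} bound on $|\gamma_\infty(G/U)|$ over all open normal $U$ by ``standard arguments on the generalized Fitting subgroup''. That step does not exist as stated: even in the finite-sink case \cite{khu-shu}, one does not bound $|\gamma_\infty(G/U)|$ directly from sink sizes. The actual route is layered --- one first shows local nilpotency in the pro-$p$ case (this is where Baire category is really used, to produce a coset identity so that $L_p(G)$ satisfies a polynomial identity and Theorem~\ref{tz} applies), then handles metanilpotent and prosoluble groups via the behaviour of Engel sinks under coprime automorphisms, and only then uses a Wilson-type nonsoluble-length bound to reduce the general profinite case to the prosoluble one. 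Your attempt to short-circuit this hierarchy by a single uniform-bound argument is where the proposal would fail; the ``hard part'' you correctly identify is not resolved by the mechanism you describe.

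A smaller issue: your sets $A_m$ need not be closed as written (the sink $\mathscr{E}(g)$ is just a countable set, not a closed one), so the Baire argument must be formulated with the closed fibres $\{x:[x,{}_ng]=s\}$ instead, which is indeed how it is done.
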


Here, ``countable'' stands for ``finite or  denumerable''. This result is a generalization of a theorem of  Wilson and Zelmanov \cite{wi-ze} saying that any Engel profinite group is locally nilpotent (which was later also extended to Engel compact  groups by Medvedev \cite{med}). 

It is easy to see that every element of a group $G$ in Theorem~\ref{t1}  has an Engel sink of cardinality strictly less than~$2^{\aleph_0}$. Therefore Theorem~\ref{t1} follows from Theorem~\ref{t4.1} under the Continuum Hypothesis. However, we aim at proving Theorem~\ref{t1} without assuming this additional axiom. The condition on the cardinalities being strictly less than~$2^{\aleph_0}$ rather than countable presents certain challenges. For example,  as shown by Ab\'ert \cite{abe}, the Baire Category Theorem (saying that if a compact Hausdorff group is a countable union of closed subsets, then one of these subsets has non-empty interior) cannot be proved in the version where the union of closed subsets  is taken over a set of cardinality less than  $2^{\aleph_0}$.
It remains an open question whether a stronger version of Theorem~\ref{t4.1} holds, with the hypothesis  weakened to every element having an Engel sink of cardinality less than  $2^{\aleph_0}$ (when the Continuum Hypothesis is not assumed).

The proof of Theorem~\ref{t1} is in many respects similar to the proof of Theorem~\ref{t4.1}  in \cite{khu-shu191}, albeit with certain modifications. First the case of pro-$p$ groups  is considered, where powerful Lie ring methods are applied including Zelmanov's theorem on Lie algebras satisfying a polynomial identity and generated by elements all of whose products are ad-nilpotent
\cite{ze92,ze95,ze17}.  Then the case of prosoluble groups is settled by using properties of Engel words and Engel sinks in coprime actions and a Hall--Higman--type theorem. The general case of profinite groups is dealt with by bounding the nonsoluble length of the group, which enables induction on this length. (We introduced the nonsoluble length in \cite{khu-shu131}, although bounds for nonsoluble length had been implicitly used in various earlier papers, for example, in the celebrated Hall--Higman paper \cite{ha-hi},
or in Wilson's  paper \cite{wil83}; more recently, bounds for the nonsoluble length were studied in connection with verbal subgroups in finite and profinite groups in \cite{dms1,flp, khu-shu132, 68, austral}.)

\section{Preliminaries}

In this section we recall some  notation and terminology and establish some important properties of Engel words and  Engel sinks in finite and profinite groups.

Our notation and terminology for profinite groups is standard; see, for example,  \cite{rib-zal, wil}.  A subgroup (topologically) generated by a subset $S$ is denoted by $\langle S\rangle$. Recall that centralizers are closed subgroups, while commutator subgroups $[B,A]=\langle [b,a]\mid b\in B,\;a\in A\rangle$ are the closures of the corresponding abstract commutator subgroups.

For a group $A$ acting by automorphisms on a group $B$ we use the usual notation for commutators $[b,a]=b^{-1}b^a$ and commutator subgroups $[B,A]=\langle [b,a]\mid b\in B,\;a\in A\rangle$, as well as for centralizers $C_B(A)=\{b\in B\mid b^a=b \text{ for all }a\in A\}$.

We record for convenience the following simple lemma.

\begin{lemma}[{see, for example,  \cite[Lemma~2.1]{khu-shu191}}]  \label{l-fng}
Suppose that $\varphi$ is a continuous automorphism of a compact group $G$ such that
$G=[G,\varphi ]$. If $N$ is a normal subgroup of $G$ contained in $C_G(\varphi )$, then $N\leq Z(G)$.
\end{lemma}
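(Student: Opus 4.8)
The plan is to realise $\varphi$ as conjugation inside the (abstract) semidirect product $\Gamma=G\rtimes\langle\varphi\rangle$ and then invoke the three subgroups lemma; the topology of $G$ will be used only at the very end. Since $N$ is normal in $G$ and is fixed pointwise by $\varphi$ — so, in particular, $\varphi$-invariant — $N$ is a normal subgroup of $\Gamma$, and the desired conclusion $N\leq Z(G)$ is precisely $[G,N]=1$. Two commutator identities follow at once from $N\leq C_G(\varphi)$: first, $[\langle\varphi\rangle,N]=1$, since $\varphi$ (hence every power of $\varphi$) commutes with every element of $N$; and second, $[[N,G],\varphi]=1$, since $[N,G]\leq N$ by normality of $N$ in $G$, and therefore $[N,G]\leq C_G(\varphi)$.

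Feeding these into the three subgroups lemma for the triple $\langle\varphi\rangle$, $N$, $G$ — with inputs $[[\langle\varphi\rangle,N],G]=1$ (immediate from the first identity) and $[[N,G],\langle\varphi\rangle]=1$ (the second identity) — gives $[\langle[g,\varphi]\mid g\in G\rangle,\,N]=1$; that is, the abstract commutator subgroup $\langle[g,\varphi]\mid g\in G\rangle\leq G$ is contained in $C_G(N)$.

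To finish, I would recall that centralizers in a compact group are closed, so $C_G(N)$ is a closed subgroup of $G$ containing $\langle[g,\varphi]\mid g\in G\rangle$, hence containing its closure; but that closure is, by hypothesis, the subgroup $[G,\varphi]=G$. Therefore $G\leq C_G(N)$, i.e.\ $N\leq Z(G)$. I do not expect a genuine obstacle here: the heart of the matter is the classical three subgroups manipulation, and the only point needing care is that this manipulation outputs the \emph{abstract} commutator subgroup, so one must pass to its closure — using that $C_G(N)$ is closed — in order to apply the hypothesis in its ``closed'' form $[G,\varphi]=G$.
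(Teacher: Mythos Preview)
The paper itself gives no proof of this lemma, only a reference to \cite{khu-shu191}. Your argument via the three subgroups lemma in the semidirect product $G\rtimes\langle\varphi\rangle$, followed by the passage from the abstract commutator subgroup to its closure using that $C_G(N)$ is closed, is correct and is precisely the standard proof of this fact.
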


We denote by $\pi (k)$ the set of prime divisors of $k$, where $k$ may be a positive integer or a Steinitz number, and by $\pi (G)$ the set of prime divisors of the orders of elements of a (profinite) group $G$. Let $\sigma$ be a set of primes. An element $g$ of a group is  a $\sigma$-element if $\pi(|g|)\subseteq \sigma$, and a group $G$ is a $\sigma$-group if all of its elements are $\sigma$-elements. We denote by $\sigma'$ the complement of $\sigma$ in the set of all primes. When $\sigma=\{p\}$,  we write $p$-element, $p'$-element, etc.

Recall that a pro-$p$ group  is an inverse limit of finite $p$-groups, a pro-$\sigma $ group is an inverse limit of finite $\sigma$-groups, a pronilpotent group is an inverse limit of finite nilpotent groups, a prosoluble group is an inverse limit of finite soluble groups.

We denote by  $\gamma _{\infty}(G)=\bigcap _i\gamma _i(G)$ the intersection of the lower central series of a group~$G$. A profinite group $G$ is pronilpotent if and only if $\gamma _{\infty}(G)=1$.

Profinite groups have Sylow $p$-subgroups and satisfy analogues of the Sylow theorems.  Prosoluble groups satisfy analogues of the theorems of Hall and Chunikhin on Hall $\pi$-subgroups and Sylow bases. We refer the reader to the corresponding chapters in \cite[Ch.~2]{rib-zal} and \cite[Ch.~2]{wil}. We add a simple folklore lemma (see, for example, \cite[Lemma~2.2]{khu-shu191}).

\begin{lemma} \label{l-prosol-by-prosol}
A profinite group $G$ that is an extension of a prosoluble group $N$ by a prosoluble group $G/N$ is prosoluble.
\end{lemma}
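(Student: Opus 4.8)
The plan is to use the standard characterisation that a profinite group is prosoluble if and only if each of its continuous finite quotients is soluble, equivalently if $G/U$ is soluble for every open normal subgroup $U$ of $G$. So I would fix an arbitrary open normal subgroup $U$ of $G$ and show that the finite group $G/U$ is soluble; the lemma then follows by passing to the inverse limit.

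To do this I would first analyse the image of $N$ in $G/U$. Since $U$ is open in $G$, the intersection $N\cap U$ is open in $N$, so $NU/U\cong N/(N\cap U)$ is a continuous finite quotient of the prosoluble group $N$ and is therefore soluble. Next I would pass to the quotient by $NU$: the subgroup $NU$ is normal in $G$ (being a product of two normal subgroups), it is open (it contains the open subgroup $U$), and it contains $N$, so $G/NU$ is a continuous finite quotient of the prosoluble group $G/N$ and is therefore soluble. Consequently $G/U$ has the normal subgroup $NU/U$, which is soluble, with soluble quotient $(G/U)/(NU/U)\cong G/NU$, so $G/U$ is a finite extension of a soluble group by a soluble group, hence soluble. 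Finally, since the open normal subgroups of $G$ form a base of neighbourhoods of the identity, $G=\varprojlim_U G/U$ is an inverse limit of finite soluble groups, i.e.\ $G$ is prosoluble.

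I do not expect any genuine obstacle: this is a routine ``quotient of a quotient'' argument. The only points that require a moment's care are topological, namely that an open subgroup of $G$ meets the closed subgroup $N$ in a subgroup open in $N$, and that $NU$ is open in $G$; both are immediate from the definitions of the subspace and quotient topologies on profinite groups. An alternative, slightly more cumbersome, route would be to work directly with compatible inverse systems of finite quotients of $N$ and of $G/N$, but the formulation in terms of open normal subgroups above is the cleanest.
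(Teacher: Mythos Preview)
Your argument is correct and is precisely the standard folklore proof one expects here. Note that the paper does not actually supply a proof of this lemma; it merely records it with a reference to \cite[Lemma~2.2]{khu-shu191}, so there is nothing further to compare.
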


Recall that the definition of an Engel sink was given in the introduction. Clearly, the intersection of two Engel sinks of a given element $g$ of a group $G$ is again an Engel sink of $g$, with the corresponding function $n(x,g)$ being the maximum of the two functions. Therefore, if $g$ has a \textit{finite} Engel sink, then $g$ has a unique smallest Engel sink. If $\mathscr E(g)$ is a smallest Engel sink of $g$, then the restriction of the mapping $x\mapsto [x,g]$ to $\mathscr E(g)$ must be surjective, which gives the following characterization.

\begin{lemma}[{\cite[Lemma~2.1]{khu-shu}}]\label{l-min}
If an element $g$ of a group $G$ has a finite Engel sink, then $g$ has a smallest Engel sink $\mathscr E (g)$ and for every $s\in \mathscr E (g)$ there is $k\in {\mathbb N}$ such that  $s=[s,{}_kg]$.
\end{lemma}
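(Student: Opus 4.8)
The plan is to turn the two observations recorded just before the statement into a precise argument, using the finiteness hypothesis at exactly two points. First I would establish the existence of a smallest Engel sink. Let $S$ be a finite Engel sink of $g$. Since $S$ is finite it has only finitely many subsets, so there are only finitely many Engel sinks of $g$ contained in $S$; let $\mathscr E(g)$ be their intersection. As the intersection of finitely many Engel sinks of $g$ is again an Engel sink of $g$ (take the pointwise maximum of the associated functions $n(x,g)$, which is finite because the intersection ranges over finitely many sinks), the set $\mathscr E(g)$ is itself an Engel sink. Moreover, if $T$ is any Engel sink of $g$, then $T\cap S$ is an Engel sink contained in $S$, whence $\mathscr E(g)\subseteq T\cap S\subseteq T$; thus $\mathscr E(g)$ is contained in every Engel sink of $g$, so it is the (unique) smallest one.

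Next I would analyse the map $\theta\colon s\mapsto[s,g]$ on $\mathscr E(g)$. The key claim is that $\theta(\mathscr E(g))$ is again an Engel sink of $g$: given $x\in G$, choose $n(x,g)$ with $[x,{}_{m}g]\in\mathscr E(g)$ for all $m\ge n(x,g)$; then for every $m\ge n(x,g)+1$ we have $[x,{}_{m}g]=[\,[x,{}_{m-1}g],g\,]=\theta\big([x,{}_{m-1}g]\big)\in\theta(\mathscr E(g))$, so $\theta(\mathscr E(g))$ is an Engel sink with function $n(x,g)+1$. Since $\mathscr E(g)$ is the smallest Engel sink, $\mathscr E(g)\subseteq\theta(\mathscr E(g))$; but $|\theta(\mathscr E(g))|\le|\mathscr E(g)|<\infty$, so $\theta(\mathscr E(g))=\mathscr E(g)$. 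Hence $\theta$ restricts to a surjection of the finite set $\mathscr E(g)$ onto itself, and is therefore a bijection of $\mathscr E(g)$.

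Finally, fix $s\in\mathscr E(g)$. The sequence $s,\theta(s),\theta^{2}(s),\dots$ lies in the finite set $\mathscr E(g)$, hence is eventually periodic; since $\theta$ is injective on $\mathscr E(g)$, it is in fact purely periodic, so there is $k\in\mathbb N$ with $\theta^{k}(s)=s$, that is, $s=[s,{}_{k}g]$. This gives the stated characterization. I do not expect a real obstacle here; the only subtlety is the double use of finiteness — to make the intersection defining $\mathscr E(g)$ a finite intersection of Engel sinks, and to upgrade the surjectivity of $\theta$ on $\mathscr E(g)$ to bijectivity — which is precisely where the hypothesis of a \emph{finite} Engel sink enters.
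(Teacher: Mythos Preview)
Your proof is correct and follows essentially the same approach as the paper: you fill in the details of the two observations the paper records just before the lemma, namely that intersections of Engel sinks are Engel sinks (so a finite one yields a smallest one) and that the map $x\mapsto[x,g]$ must be surjective on the smallest sink. Your treatment of the surjectivity-to-bijectivity step and the resulting periodicity is exactly the intended completion of the paper's one-line remark.
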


The following well-known fact is a straightforward consequence of the Baire Category Theorem (see \cite[Theorem~34]{kel}).

\begin{theorem}\label{bct}
If a compact Hausdorff group is a countable union of closed subsets, then one of these subsets has non-empty interior.
\end{theorem}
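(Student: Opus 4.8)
The plan is to derive Theorem~\ref{bct} directly from the Baire Category Theorem in the form valid for locally compact Hausdorff spaces; the group structure plays no role here beyond guaranteeing that the underlying space is non-empty, since it contains the identity. Concretely, I would first recall (or, for a self-contained account, reprove) the \emph{Baire property}: in a compact Hausdorff space $X$ every countable family of dense open sets has dense intersection. The proof is the standard nested-open-sets argument, using that a compact Hausdorff space is normal, hence regular. Given a non-empty open set $V$ and dense open sets $U_1, U_2, \dots$, one chooses by density of $U_1$ together with regularity a non-empty open $W_1$ with $\overline{W_1}\subseteq V\cap U_1$, and then inductively a non-empty open $W_{k+1}$ with $\overline{W_{k+1}}\subseteq W_k\cap U_{k+1}$. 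The sets $\overline{W_k}$ form a decreasing chain of non-empty closed subsets of the compact space $X$, so by the finite intersection property $\bigcap_k\overline{W_k}\neq\emptyset$; any point of this intersection lies in $V\cap\bigcap_k U_k$, which proves density.

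Then I would apply this with $X=G$. Suppose $G=\bigcup_{k}F_k$ is a countable union of closed subsets, and assume for contradiction that every $F_k$ has empty interior. Then each $U_k:=G\setminus F_k$ is open and dense, whereas $\bigcap_k U_k = G\setminus\bigcup_k F_k=\emptyset$, contradicting the density established above (recall $G\neq\emptyset$). Hence some $F_k$ has non-empty interior. (When the union is finite the same argument applies, or one may simply note that a finite union of closed sets with empty interior again has empty interior.)

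There is no genuine obstacle in this argument: its sole substantive ingredient is the Baire property of compact Hausdorff spaces, which is entirely classical and is exactly the statement cited from Kelley. The lemma is recorded at this point only for convenient reference in the later parts of the paper, where $G$ will be expressed as a union of closed sets such as $\{x\in G : [x,{}_ny]=s\}$ indexed by the values $s$ of the Engel word, and one will want to locate a value on a subset with non-empty interior.
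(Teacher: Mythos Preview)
Your proof is correct and is exactly the standard derivation the paper has in mind: the paper does not give a proof at all but simply records the statement as ``a straightforward consequence of the Baire Category Theorem'' with a reference to Kelley, and your argument is precisely that consequence spelled out. One small inaccuracy in your final paragraph: in this paper Theorem~\ref{bct} is \emph{not} applied to the fibres $\{x: [x,{}_ny]=s\}$ (those could number up to just below $2^{\aleph_0}$, and for them the paper uses Proposition~\ref{pr-dks} instead); rather, it is applied in Proposition~\ref{pr-fnl} to the countable family $S_i=\{(x,y)\in T\times T: \langle x,y\rangle\text{ is nilpotent of class }\le i\}$.
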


As shown by Ab\'ert \cite{abe}, an analogue of this theorem does not hold in the version where the union of closed subsets  is taken over a set of cardinality less than  $2^{\aleph_0}$ (without the Continuum Hypothesis). But a certain special case of such a generalization was recently obtained  in \cite{dks}.

\begin{proposition}[{\cite[Proposition 2.1]{dks}}] \label{pr-dks}
  Let $\f : X \to Y $ be a continuous map between non-empty profinite spaces such that the cardinality of the image $|\f (X)|$ is strictly smaller than $2^{\aleph_0}$. Then there exists a non-empty open subset $U$ of $ X$ such that the restriction $\f |_U$ is constant.
\end{proposition}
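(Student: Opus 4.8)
The plan is to reduce the statement to a property of the image $K:=\varphi(X)$ and then settle it by a Cantor--scheme construction. Since $X$ is a nonempty profinite space and $\varphi$ is continuous, $K$ is a nonempty, compact Hausdorff (indeed profinite) subspace of $Y$, and by hypothesis $|K|<2^{\aleph_0}$. Viewing $\varphi$ as a continuous surjection $X\to K$, the point is that it suffices to find an \emph{isolated} point $y_0$ of $K$: then $\{y_0\}$ is open in $K$, so $U:=\varphi^{-1}(\{y_0\})$ is a nonempty open subset of $X$, and $\varphi|_U$ is constant with value $y_0$.

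Thus everything comes down to the purely topological claim that a nonempty compact Hausdorff space $K$ with $|K|<2^{\aleph_0}$ has an isolated point; equivalently, in contrapositive form, that a nonempty \emph{perfect} compact Hausdorff space $K$ (one with no isolated points) satisfies $|K|\ge 2^{\aleph_0}$. I would prove this by the standard binary--tree argument: recursively assign to each finite $\{0,1\}$-string $s$ a nonempty open set $U_s\subseteq K$, with $U_\emptyset=K$, such that $\overline{U_{s0}}$ and $\overline{U_{s1}}$ are disjoint and contained in $U_s$. This is possible at each stage because a nonempty open subset of a perfect Hausdorff space contains at least two points (a one-point open set would be an isolated point of $K$); separating two such points by disjoint open neighbourhoods inside $U_s$ and then shrinking them using the regularity of a compact Hausdorff space yields $U_{s0}$ and $U_{s1}$ with disjoint closures.

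Given the scheme, for each branch $\sigma\in\{0,1\}^{\mathbb N}$ the sets $\overline{U_{\sigma\vert n}}$ form a decreasing chain of nonempty closed subsets of the compact space $K$, hence have nonempty intersection; pick $x_\sigma$ in it. If $\sigma\ne\tau$ and $n$ is least with $\sigma(n)\ne\tau(n)$, then $x_\sigma$ and $x_\tau$ lie in the disjoint closed sets $\overline{U_{\sigma\vert(n+1)}}$ and $\overline{U_{\tau\vert(n+1)}}$, so $x_\sigma\ne x_\tau$. Hence $\sigma\mapsto x_\sigma$ is injective and $|K|\ge 2^{\aleph_0}$, which proves the claim and therefore the proposition.

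I do not expect a serious obstacle: after the reduction to $K$ the only real content is the Cantor scheme, and the rest is routine bookkeeping. The details to watch are that the construction uses only the Hausdorff and compactness (hence regularity) properties of $K$ --- total disconnectedness plays no role --- and that ``the preimage of an isolated point is open'' holds precisely because the corestriction $X\to K$ is continuous. One could instead invoke the fact that a compact Hausdorff space is scattered if and only if it contains no nonempty perfect subset and that a nonempty scattered space has an isolated point, but that is heavier machinery than necessary, so I would give the direct construction above.
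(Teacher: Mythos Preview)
Your argument is correct. Note, however, that the paper does not supply its own proof of this proposition: it is quoted verbatim as \cite[Proposition~2.1]{dks} and used as a black box, so there is no in-paper proof to compare against. For what it is worth, the proof in the cited source follows the same line as yours --- reduce to the image, and show that a nonempty compact Hausdorff space of cardinality below $2^{\aleph_0}$ must have an isolated point via a Cantor-scheme/perfect-set argument --- so your write-up matches the intended approach.
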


This proposition was used for deriving the following  fact.

\begin{lemma}[{\cite[Lemma~2.2]{dks}}] \label{l-c}
Let $G$ be a profinite group and let $x\in  G$. If the conjugacy class $\{x^g \mid  g\in  G\}$
contains less than  $2^{\aleph_0}$ elements, then it is finite.
\end{lemma}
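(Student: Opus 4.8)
The plan is to apply Proposition~\ref{pr-dks} to the conjugation map and then pass to an open subgroup. Consider the continuous map $\f\colon G\to G$ defined by $\f(g)=x^g=g^{-1}xg$. Its image $\f(G)$ is precisely the conjugacy class $\{x^g\mid g\in G\}$, which by hypothesis has cardinality strictly less than $2^{\aleph_0}$. Since $G$, regarded as a profinite space, is non-empty, Proposition~\ref{pr-dks} yields a non-empty open subset $U\subseteq G$ on which $\f$ is constant.

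Next I would localise this to a subgroup. Fix $g_0\in U$. Since $G$ is profinite and $U$ is open, there is an open normal subgroup $N$ of $G$ with $g_0N\subseteq U$. For every $n\in N$ we then have $x^{g_0n}=x^{g_0}$, that is, $(x^{g_0})^n=x^{g_0}$, and hence $N\leq C_G(x^{g_0})$. As $N$ is open, $C_G(x^{g_0})$ has finite index in $G$.

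Finally, $x^{g_0}$ is conjugate to $x$, so $C_G(x)$ is conjugate to $C_G(x^{g_0})$ and therefore also has finite index in $G$. Since the conjugacy class of $x$ is in bijection with the coset space $G/C_G(x)$, it is finite, which is the assertion.

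I expect no serious obstacle here: the substantive content — a Baire-category-type dichotomy that remains valid for continuous images of cardinality below $2^{\aleph_0}$, even though the naive version for unions of closed sets of such cardinality fails by Ab\'ert's example — is entirely packaged into Proposition~\ref{pr-dks}. The remaining ingredients (continuity of the conjugation map, the fact that a non-empty open set contains a coset of an open normal subgroup, and conjugacy-invariance of the index of a centraliser) are routine facts about profinite groups. The one point meriting a moment's care is that one genuinely obtains a whole coset $g_0N$ inside $U$, which is where profiniteness rather than mere compactness is used.
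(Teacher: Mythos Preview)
Your proof is correct and follows exactly the route the paper indicates: the lemma is quoted from \cite{dks} without proof, but the paper remarks that it is derived from Proposition~\ref{pr-dks}, which is precisely what you do by applying that proposition to the conjugation map and passing to a coset of an open normal subgroup.
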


We now use Proposition~\ref{pr-dks} for proving two important technical lemmas about values of $n$-Engel words in profinite groups, which will be crucial in the proof of Theorem~\ref{t1}.

\begin{lemma}\label{l1}
Suppose that for an element $g$ of a profinite
group $G$ the cardinality of the set of $n$-Engel words $\{[h,{}_ng]\mid h\in G\}$
is strictly smaller than $2^{\aleph_0}$.
Then there is an element $s\in G$ and a coset $Nb$ of an open normal subgroup $N$ such that
$$
[xb,{}_ng]=s \qquad \text{for all}\quad x\in N.
$$
\end{lemma}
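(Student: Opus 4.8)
The natural approach is to apply Proposition~\ref{pr-dks} to the map $\f\colon G\to G$ given by $\f(h)=[h,{}_ng]$. This is continuous (Engel words are built from multiplication and inversion, which are continuous in a profinite group), and $G$ is a non-empty profinite space, so the hypothesis $|\f(G)|<2^{\aleph_0}$ lets us conclude that there is a non-empty open subset $U\subseteq G$ on which $\f$ is constant, say $\f(h)=s$ for all $h\in U$. The only thing to arrange is that $U$ can be taken of the form $Nb$ for an open normal subgroup $N$; but open subsets of a profinite group contain cosets of open normal subgroups, so pick any $h_0\in U$ and any open normal subgroup $N$ with $Nh_0\subseteq U$, and set $b=h_0$. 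Then $[xb,{}_ng]=s$ for every $x\in N$, which is exactly the claimed conclusion.

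The one point that deserves care is the continuity of $h\mapsto[h,{}_ng]$ as a function of $h$ alone (with $g$ fixed): since $[h,{}_ng]=[h,g]\cdot\text{(stuff)}$ is obtained from $h$ and the fixed element $g$ by finitely many group operations, and these operations are jointly continuous on the compact group $G$, the map is continuous. Likewise one should note $\f(G)$ is non-empty simply because $G$ is non-empty (taking $h=1$ already gives a value). So Proposition~\ref{pr-dks} applies verbatim.

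I do not anticipate a genuine obstacle here: the lemma is essentially a direct packaging of Proposition~\ref{pr-dks} for the specific continuous map $h\mapsto[h,{}_ng]$, together with the standard fact that a non-empty open set in a profinite group contains a coset of an open normal subgroup. The only mild subtlety is cosmetic --- phrasing the conclusion with the coset written on the correct side ($xb$ rather than $bx$) and recording the value $s$ as an element of $G$ --- but this is immediate once $U$ and $b$ are chosen.
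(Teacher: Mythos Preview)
Your proposal is correct and follows exactly the paper's approach: apply Proposition~\ref{pr-dks} to the continuous map $h\mapsto [h,{}_ng]$ and then shrink the resulting open set to a coset of an open normal subgroup. The paper's proof is in fact more terse than yours, leaving the passage from a non-empty open set to a coset $Nb$ implicit.
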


\begin{proof}
The mapping
$$
\f: h\to [h,{}_ng],\qquad h\in G,
$$
is continuous. Hence the result follows from Proposition~\ref{pr-dks}.
\end{proof}

\begin{lemma}\label{l2}
Suppose that for an element $g$ of a profinite
group $G$ the cardinality of the set of $n$-Engel words $\{[h,{}_ng]\mid x\in G\}$
is strictly smaller than $2^{\aleph_0}$.
 Then there is a positive integer $k$ and a coset $Nb$ of an open normal subgroup $N$  such that
$$
[[xb,{}_ng],g^k]=1 \qquad \text{for all}\quad x\in N.
$$
\end{lemma}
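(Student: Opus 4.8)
The plan is to bootstrap from Lemma~\ref{l1}. By that lemma there is an element $s\in G$ and a coset $Nb$ of an open normal subgroup $N$ with $[xb,{}_ng]=s$ for all $x\in N$. The key observation is that applying one more commutation with $g$ to a constant value produces a controlled conjugacy behaviour: from $[xb,{}_ng]=s$ we get $[xb,{}_{n+1}g]=[s,g]$, and more generally $[xb,{}_{n+j}g]=[s,{}_jg]$ for every $j\geq 0$, so the value $[xb,{}_mg]$ depends only on $m$ and not on $x\in N$, for all $m\geq n$. In particular the set $\{[h,{}_mg]\mid h\in G\}$ for any fixed $m\geq n$ still has cardinality strictly less than $2^{\aleph_0}$ (it is a continuous image of the original small set, or one can simply reapply Proposition~\ref{pr-dks} directly). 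Thus we may, after passing to the coset, regard $s$ as an element whose iterated commutators with $g$ form a descending chain of "sink" elements.

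Next I would study the element $s$ and the action of $g$ on it. Consider the closed subgroup $H=\langle s^{g^i}\mid i\in\Z\rangle$, a $\langle g\rangle$-invariant subgroup, and look at the orbit of $s$ under conjugation by $\langle g\rangle$, equivalently the set $\{[s,{}_jg]\mid j\geq 0\}$ together with its closure. The aim is to show this orbit is finite. The point is that $[s,{}_jg]$ for $j\geq 0$ are all $n$-Engel values in $G$ (since $[s,{}_jg]=[xb,{}_{n+j}g]$ lies in the image of the word map after we note $[xb,{}_{n+j}g]$ is itself an $n$-Engel value — commuting with $g$ further times keeps us among $n$-Engel values of the element $g$). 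Hence the set $\{[s,{}_jg]\mid j\geq 0\}$ has cardinality less than $2^{\aleph_0}$. Now apply Lemma~\ref{l-c}: the conjugacy class of $s$ under $G$ has fewer than $2^{\aleph_0}$ elements, hence is finite; in particular the $\langle g\rangle$-orbit of $s$ is finite, so there is $k$ with $s^{g^k}=s$, i.e. $[s,g^k]=1$.

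Finally I would assemble the conclusion. Having found $k$ with $[s,g^k]=1$ and the coset $Nb$ from Lemma~\ref{l1} with $[xb,{}_ng]=s$ for all $x\in N$, we get $[[xb,{}_ng],g^k]=[s,g^k]=1$ for all $x\in N$, which is exactly the asserted statement. I expect the main technical obstacle to be the justification that one may legitimately apply Lemma~\ref{l-c} here: one must verify carefully that the set $\{[s,{}_jg]\mid j\geq 0\}$, or rather the full conjugacy class of $s$, genuinely has cardinality below $2^{\aleph_0}$ — the subtlety is that a priori Lemma~\ref{l-c} needs the conjugacy class itself to be small, so one must bound $|\{s^g\mid g\in G\}|$ rather than just $|\{s^{g^i}\mid i\in\Z\}|$. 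If that conjugacy-class bound is not directly available, the fallback is to avoid Lemma~\ref{l-c} and instead argue inside the closed abelian-by-(something) structure: reapply Proposition~\ref{pr-dks} to the continuous map $x\mapsto [xb,{}_{n+1}g]$ to pin down a possibly smaller coset on which $[s',g]=1$ already, where $s'$ is the new constant value, and iterate finitely many times using a descending chain condition on the relevant closed subgroups; this produces the exponent $k$ (as a product of the finitely many stages) and the desired coset directly.
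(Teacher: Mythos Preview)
You are overcomplicating this and miss the paper's one-line observation. After Lemma~\ref{l1} gives an open normal $N$, a coset $Nb$, and $s\in G$ with $[xb,{}_ng]=s$ for all $x\in N$, the paper simply notes that $G/N$ is \emph{finite}: conjugation by $g$ permutes the finitely many cosets of $N$, so some power $g^k$ fixes $Nb$, meaning $b^{g^k}=xb$ for some $x\in N$. Since $g$ commutes with $g^k$,
\[
s^{g^k}=[b,{}_ng]^{g^k}=[b^{g^k},{}_ng]=[xb,{}_ng]=s,
\]
whence $[[xb,{}_ng],g^k]=[s,g^k]=1$ for all $x\in N$. No further cardinality argument is needed.

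Your route, by contrast, has real gaps. First, you conflate the iterated commutators $[s,{}_jg]$ with the conjugates $s^{g^j}$; it is the latter you need finite, and they are different objects. Second, your appeal to Lemma~\ref{l-c} requires the full $G$-conjugacy class of $s$ to be small, but the hypothesis only bounds $\{[h,{}_ng]\mid h\in G\}$ for the \emph{fixed} $g$, and for general $h\in G$ one has $s^h=[b^h,{}_ng^h]$ with $g^h\ne g$; you flag this obstacle but do not resolve it. (A salvage along your lines is possible: for $h\in\overline{\langle g\rangle}$ one has $g^h=g$, so $s^h=[b^h,{}_ng]$ is still an $n$-Engel value in $g$; thus the $\overline{\langle g\rangle}$-orbit of $s$ has cardinality $<2^{\aleph_0}$, hence is finite since an infinite profinite coset space has at least $2^{\aleph_0}$ points. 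But this is longer than the paper's argument.) Finally, your fallback of reapplying Proposition~\ref{pr-dks} to $x\mapsto[xb,{}_{n+1}g]$ achieves nothing: that map is already constant on $Nb$, equal to $[s,g]$, so no new information is obtained and no descending-chain iteration gets started.
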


\begin{proof}
By Lemma~\ref{l1}, there is an element $s\in G$ and a coset $Nb$ of an open normal subgroup $N$ such that
$$
[xb,{}_ng]=s\qquad \text{for all}\quad x\in N.
$$
Since $G/N$ is a finite group, the coset $Nb$ is invariant under conjugation by some power $g^k$. Then
$$
\begin{aligned}
s^{g^k}&=[b,{}_ng]^{g^{k}}=[b^{g^{k}},{}_ng]\\&=[xb,{}_ng]\quad \text{for some } x\in N\\
&=s.
\end{aligned}
$$
In other words, $g^{k}$ commutes with $s$, so that
\begin{equation*}
[[xb,{}_ng],g^{k}]=[s,g^k]=1\qquad \text{for all}\quad x\in N.\tag*{\qedhere}
\end{equation*}
\end{proof}

\begin{remark} The condition that the word $[x,{}_ny]$ has strictly less than $2^{\aleph_0}$ values in a group $G$ is inherited by every section of $G$, and we shall freely use this property without special references.
\end{remark}

\section{Pronilpotent groups}\label{s-pron}

When $G$ is a pro-$p$ group, or more generally a pronilpotent group, the conclusion of Theorem~\ref{t1} is equivalent to $G$ being locally nilpotent, and this is what we prove in this section.

\begin{theorem}
\label{t2}
Suppose that $G$ is a 
pronilpotent group in which the word $[x,{}_ny]$ has strictly less than $2^{\aleph_0}$ values. Then $G$ is locally nilpotent.
\end{theorem}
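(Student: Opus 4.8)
The plan is to reduce to the case of a pro-$p$ group and then apply the Lie ring method. Since a pronilpotent group is the direct product (inverse limit) of its Sylow subgroups $G_p$, and the property of $[x,{}_ny]$ having fewer than $2^{\aleph_0}$ values is inherited by each $G_p$, it suffices to show that each $G_p$ is locally nilpotent; indeed, a Cartesian product of locally nilpotent pro-$p$ groups over distinct primes is locally nilpotent. So assume henceforth that $G$ is a pro-$p$ group with the stated property. We must show $G$ is locally nilpotent, equivalently (by the Wilson--Zelmanov theory) that every finitely generated closed subgroup is nilpotent.

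The heart of the argument is to verify, for a suitable finitely generated closed subgroup $H$, that the associated graded Lie algebra $L = L(H)$ over $\F_p$ (with respect to a filtration by, say, the Zassenhaus--Jennings--Lazard series or dimension subgroups) satisfies the hypotheses of Zelmanov's theorem: it is generated by elements whose products are all ad-nilpotent, and it satisfies a polynomial identity. First I would use Lemma~\ref{l2}: for each $g \in G$ there is a positive integer $k = k(g)$ and an open normal subgroup $N = N(g)$ with a coset representative $b$ such that $[[xb,{}_ng],g^k] = 1$ for all $x \in N$. Passing to a finitely generated subgroup and its generators, one extracts from this, together with the $n$-Engel identity holding on the coset $Nb$, the information that enough homogeneous elements of $L$ are ad-nilpotent: the generators of $L$ coming from $g$ act ad-nilpotently, and via the standard Lazard-style computation one obtains that $L$ is generated by ad-nilpotent elements and, more importantly, that all Lie-monomials in the generators are ad-nilpotent. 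The $n$-Engel law on a coset also forces a (multilinear) polynomial identity on $L$ after the standard linearization argument, so Zelmanov's theorem applies and $L$ is nilpotent.

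From nilpotency of $L(H)$ one concludes that $H$ has a powerful open subgroup of finite rank, or more directly that $H$ is $p$-adic analytic; in any case, combined with the $n$-Engel condition holding on a finite-index subset, one deduces $H$ is nilpotent. Since $H$ was an arbitrary finitely generated closed subgroup, $G$ is locally nilpotent, completing the pro-$p$ case and hence the theorem.

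The main obstacle I expect is the ad-nilpotency verification: Lemma~\ref{l2} only gives, for each $g$, a \emph{coset} $Nb$ on which the relevant commutator vanishes, not the whole group, and the integer $k(g)$ and subgroup $N(g)$ vary with $g$. Converting this coset-wise, $g$-dependent data into a uniform statement that \emph{all} products of a fixed finite generating set of $L$ are ad-nilpotent — with a bound independent of the generator — is the delicate point, and it is precisely here that one must argue carefully (for instance by a compactness/pigeonhole argument over the finitely many cosets, or by first replacing $G$ by an open subgroup on which the word behaves uniformly) rather than by a routine computation. Establishing the polynomial identity on $L$ from an identity that only holds on a coset is a similar but milder difficulty, handled by the same kind of reduction to an open subgroup.
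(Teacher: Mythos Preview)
Your reduction to the pro-$p$ case has a genuine gap. You assert that ``a Cartesian product of locally nilpotent pro-$p$ groups over distinct primes is locally nilpotent,'' but this is false in general: for each prime $p$ choose a $2$-generator finite $p$-group $G_p$ of nilpotency class $c_p\to\infty$, and let $a=(a_p)$, $b=(b_p)$ with $\langle a_p,b_p\rangle=G_p$. Then each $G_p$ is (locally) nilpotent, yet the closed subgroup $\langle a,b\rangle\le\prod_pG_p$ is not nilpotent, since $[a,{}_{c_p}b]$ has nontrivial $p$-component. So knowing only that every $G_p$ is locally nilpotent does \emph{not} suffice. The paper's proof handles exactly this point: given $a,g\in G$, it uses Lemma~\ref{l2} globally (not inside a single Sylow subgroup) to produce a finite set of primes $\pi$ such that $[g_q,{}_{n+1}a_q]=1$ for all $q\notin\pi$ with a \emph{uniform} bound $n+1$, and then invokes the pro-$p$ result only for the finitely many primes in $\pi$. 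Your sketch skips this uniformity argument entirely.

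A secondary gap appears in the pro-$p$ case itself. From nilpotency of $L_p(H)$ you correctly infer that $H$ is $p$-adic analytic, but the passage ``combined with the $n$-Engel condition holding on a finite-index subset, one deduces $H$ is nilpotent'' is not a routine step. The paper proceeds via the Breuillard--Gelander theorem (a $p$-adic analytic group satisfying a coset identity is soluble), and then runs an induction on derived length: assuming $H/U$ nilpotent with $U$ abelian, one uses Lemma~\ref{l1} together with the abelianness of $U$ to show each $a$ is Engel on $U\langle a\rangle$, whence $H$ is Engel and hence nilpotent by Wilson--Zelmanov. Neither the solubility input nor this induction is present in your outline.
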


The bulk of the proof is about the case where $G$ is a pro-$p$ group. First we remind the reader of important Lie ring methods in the theory of pro-$p$ groups.

For a prime number $p $, the \textit{Zassenhaus $p $-filtration} of a group $G$ (also called the \textit{$p $-dimension series}) is defined by
$$
G_i=\langle g^{p ^k}\mid g\in \gamma _j(G),\;\, jp ^k\geqslant i\rangle \qquad\text{for}\quad i=1,2,\dots
$$
This is indeed a \textit{filtration} (or an \textit{$N$-series}, or a \textit{strongly central series}) in the sense that
\begin{equation}\label{e-fil}
[G_i,G_j] \leqslant G_{i+j}\qquad \text{for all}\quad i, j.
 \end{equation}
 Then the Lie ring $D_p (G)$ is defined with the additive group
$$
D_p(G)=\bigoplus _{i}G_i/G_{i+1},
$$
where the  factors $Q_i=G_i/G_{i+1}$ are additively written. The Lie product is defined on homogeneous elements $xG_{i+1}\in Q_i$, $yG_{j+1}\in Q_j$ via the group commutators by
$$
[xG_{i+1},\, yG_{j+1}] = [x, y]G_{i+j+1}\in Q_{i+j}
$$
and extended to arbitrary elements of $D_p(G)$ by linearity. Condition~\eqref{e-fil} ensures that this  product is well-defined, and group commutator identities imply that $D_p(G)$ with these operations is a Lie ring. Since all the factors $G_i/G_{i+1}$ have prime exponent~$p $, we can view $D_p(G)$ as a Lie algebra over the field of $p $ elements $\mathbb{F}_p $. We denote  by $L_p (G)$ the subalgebra generated by the first factor $G/G_2$. (Sometimes, the notation $L_p (G)$ is used for $D_p (G)$.)

A group $G$ is said to satisfy a \textit{coset identity} if there is a group word $w(x_1,\dots ,x_m)$ and cosets $a_1H,\dots ,a_mH$ of a subgroup $H\leqslant G$  such that $w(a_1h_1,\dots ,a_mh_m)=1$ for any $h_1,\dots ,h_m\in H$. We shall use the following  result of Wilson and Zelmanov \cite{wi-ze} about coset identities.

\begin{theorem}[{Wilson and Zelmanov \cite[Theorem~1]{wi-ze}}]\label{t-coset}
If a group $G$ satisfies a coset identity on cosets of a subgroup of finite index, then for every prime $p$ the Lie algebra $L_p (G)$  satisfies a polynomial identity.
\end{theorem}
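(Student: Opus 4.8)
The plan is to follow the route of Wilson and Zelmanov and convert the group-theoretic coset identity into a polynomial identity on the associated $\mathbb{F}_p$-Lie algebra $L_p(G)$, using the Lazard--Magnus dictionary between group commutators and Lie brackets.

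\textbf{Reductions.} Replacing the subgroup of finite index by its normal core, we may assume that the coset identity has the form $w(a_1h_1,\dots,a_mh_m)=1$ for all $h_1,\dots,h_m$ in an open normal subgroup $N$ of $G$. Since every quotient of $G$ by a term $G_i$ of the Zassenhaus $p$-filtration is a $p$-group, the natural map from $G$ to its pro-$p$ completion preserves all the graded pieces $G_i/G_{i+1}$, and hence $D_p(G)$ and $L_p(G)$; moreover the coset identity is inherited by the pro-$p$ completion, on cosets of the (open) image of $N$. So we may assume outright that $G$ is a pro-$p$ group. Then $G/N$ is a finite $p$-group, whose Zassenhaus filtration is eventually trivial, so $N$ contains a term $G_c$; we fix such a $c$ and assume $N\supseteq G_c$.

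\textbf{Main argument.} Substituting $x_i=a_ih_i$ in $w$ and collecting commutators (as in the Hall--Petrescu formula), then putting every $h_i=1$, we get $w(a_1,\dots,a_m)=1$; hence the remaining product of iterated commutators in the $a_i$ and the $h_i$ is trivial for all $h_i\in N$. Since $N\supseteq G_c$, we may let the $h_i$ run over group commutators of arbitrarily large weight in \emph{arbitrary} elements of $G$, so that their images fill out generic elements of $L_p(G)$ in high degrees; the lowest-weight surviving component of this relation, read in the graded ring $D_p(G)=\bigoplus_i G_i/G_{i+1}$, is then governed by the nonzero initial Lie form of $w$ (nonzero because the Magnus embedding of a free group is faithful) and gives a Lie-polynomial relation among these generic elements. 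A polarization argument --- replacing each $h_i$ by $h_i^{\,t}$, comparing the relations obtained for varying integers $t$, and solving the resulting Vandermonde system for the multilinear part --- upgrades this to a nonzero multilinear polynomial identity satisfied by $L_p(G)$.

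\textbf{Main obstacle.} The crux is to guarantee that the Lie polynomial obtained this way is genuinely non-trivial: one has to track which lowest-weight terms of $w$ survive the substitution $x_i\mapsto a_ih_i$, the commutator collection, the passage to the graded ring, and --- most delicately --- the multilinearization, which in characteristic $p$ can kill a nonzero polynomial (as it does with $p$-th power terms). Controlling this requires a careful combinatorial analysis inside the free restricted Lie algebra over $\mathbb{F}_p$, using that $w$ is reduced; this is where the substantive part of the argument of \cite{wi-ze} lies.
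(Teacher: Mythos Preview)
The paper does not prove this theorem at all: it is quoted as Theorem~1 of Wilson--Zelmanov \cite{wi-ze} and used as a black box, with no argument given. So there is no ``paper's own proof'' to compare your proposal against.

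As for the sketch itself: it follows the right general shape of the Wilson--Zelmanov argument (commutator collection, passage to the graded Lie ring, linearization), and you are honest that the delicate point --- ensuring the resulting Lie polynomial is non-trivial over $\mathbb{F}_p$ --- is where the real content of \cite{wi-ze} lies. Two remarks nonetheless. First, your reduction to a pro-$p$ group is both unnecessary and not quite correct as stated: the theorem concerns an arbitrary abstract group $G$, the Zassenhaus filtration and $L_p(G)$ are purely algebraic, and there is no reason for $G/G_i$ to be finite, so the claim that the map to the pro-$p$ completion preserves the graded pieces and that the image of $N$ is open needs more justification than you give. The original argument works directly with the filtration, without any topological completion. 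Second, since you explicitly defer the ``substantive part of the argument'' to \cite{wi-ze}, what you have written is an outline rather than a proof; that is fine given that the paper under review does not ask for one either.
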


 Theorem~\ref{t-coset} was used in the proof of the above-mentioned theorem on profinite Engel groups, which we state here for convenience.

\begin{theorem}[{Wilson and Zelmanov \cite[Theorem~5]{wi-ze}}]\label{t-wz}
Every profinite Engel group is locally nilpotent.
\end{theorem}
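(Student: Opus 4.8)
The plan is to follow the now-standard Lie-theoretic route: reduce first to the case of a finitely generated pro-$p$ group, and then invoke Zelmanov's theorem on Lie algebras satisfying a polynomial identity. First I would set up the reductions. To prove local nilpotency it suffices to show that every finitely generated closed subgroup is nilpotent, so I may assume that $G$ is topologically finitely generated. Every continuous finite quotient of $G$ is a finite Engel group, hence nilpotent by the classical theorem of Zorn; therefore $G$ is pronilpotent and decomposes as the Cartesian product $G=\prod_p G_p$ of its Sylow pro-$p$ subgroups. Each $G_p$ is again a finitely generated pro-$p$ Engel group, and it remains to prove that each $G_p$ is nilpotent, with a bound on the nilpotency class that is uniform in $p$ so that the product is nilpotent. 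Thus the heart of the matter is: a finitely generated pro-$p$ Engel group is nilpotent.

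For a fixed prime $p$ I would pass to the $\mathbb{F}_p$-Lie algebra $L=L_p(G)$ generated by $G/G_2$; since $G$ is finitely generated, $L$ is generated by finitely many elements $\bar x_1,\dots,\bar x_d$. The Engel condition furnishes a covering of the compact group $G\times G$ by the closed sets $\{(x,y):[x,{}_m y]=1\}$, $m=1,2,\dots$, and by the Baire Category Theorem (Theorem~\ref{bct}) one of them has non-empty interior. This yields a coset identity of the form $[x,{}_N y]=1$ valid for all $x,y$ ranging over cosets of an open, hence finite-index, subgroup. By Theorem~\ref{t-coset} the Lie algebra $L$ therefore satisfies a polynomial identity.

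Next I would establish that the generators, and more generally all Lie products of the $\bar x_i$, are ad-nilpotent in $L$. Fixing an element $a\in G$, the Engel condition gives $G=\bigcup_m\{y:[y,{}_m a]=1\}$; a second application of Baire produces an integer $N_a$ and a coset of an open subgroup on which $[y,{}_{N_a}a]=1$, and from this coset relation one deduces that $\operatorname{ad}\bar a$ is a nilpotent derivation of $L$. Applying this to the group elements realizing the various Lie monomials in $\bar x_1,\dots,\bar x_d$ shows that all products of the generators are ad-nilpotent. Now $L$ is a finitely generated Lie algebra satisfying a polynomial identity, generated by elements all of whose products are ad-nilpotent, so by Zelmanov's theorem \cite{ze92,ze95,ze17} $L$ is nilpotent. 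Finally, the standard correspondence for finitely generated pro-$p$ groups---nilpotency of $L_p(G)$ forcing the lower central series of $G$ to terminate---gives that $G$ itself is nilpotent, and tracking the quantitative bounds, which depend only on the coset identity and the number of generators, yields the required uniformity in $p$.

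The main obstacle is the passage from the group-theoretic Engel condition, which is only pointwise and carries no uniform bound, to the bounded ad-nilpotency of the generators demanded by Zelmanov's theorem: the Baire argument must be leveraged to convert a single coset relation $[y,{}_N a]=1$ into genuine nilpotency of the derivation $\operatorname{ad}\bar a$ on the whole of $L$. The two subsidiary difficulties are the final descent from nilpotency of $L_p(G)$ back to nilpotency of $G$, and ensuring that the nilpotency classes obtained for the various $G_p$ are bounded uniformly in the prime $p$, so that the Cartesian product $\prod_p G_p$ is genuinely nilpotent rather than merely a product of nilpotent groups of unbounded class.
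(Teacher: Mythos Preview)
The paper does not give its own proof of this theorem: it is quoted as \cite[Theorem~5]{wi-ze} and used as a black box, with only the one-line remark that ``the proof of Theorem~\ref{t-wz} was based on the following deep result of Zelmanov'' (namely Theorem~\ref{tz}). There is therefore no in-paper argument to compare your proposal against; what you have sketched is, in outline, the original Wilson--Zelmanov proof, and it is consistent with the paper's sole hint that Theorem~\ref{tz} is the engine.

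One point in your sketch is stated too optimistically. Nilpotency of $L_p(G)$ does \emph{not} by itself force the lower central series of a finitely generated pro-$p$ group $G$ to terminate; what it gives (via Lazard, as the paper itself invokes in the proof of Proposition~\ref{pr-pro-p}) is that $G$ is $p$-adic analytic, and one then needs a further argument---for instance, linearity over $\mathbb{Q}_p$ together with the fact that Engel linear groups are locally nilpotent---to conclude that $G$ is nilpotent. You flag this descent as a ``subsidiary difficulty'', which is fair, but your one-line description of it is not the actual mechanism. The other obstacle you name, uniformity of the nilpotency class in $p$, is genuine: it is handled by performing the Baire argument once on $G\times G$ (not on each $G_p$ separately), so that the coset identity, the ad-nilpotency indices, and hence the bound coming from Theorem~\ref{tz} are all independent of~$p$. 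Your outline already does the Baire step at the level of $G$, so this goes through.
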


The proof of Theorem~\ref{t-wz} was based on the following deep result of Zelmanov \cite{ze92,ze95,ze17}, which is also used in our paper.

\begin{theorem}[{Zelmanov \cite{ze92,ze95,ze17}}]\label{tz}
 Let $L$ be a Lie algebra over a field and suppose that $L$ satisfies
a polynomial identity. If $L$ can be generated by a finite set $X$ such that every
commutator in elements of $X$ is ad-nilpotent, then $L$ is nilpotent.
\end{theorem}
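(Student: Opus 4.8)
The plan is to prove nilpotency through the theory of \emph{sandwich elements}, the engine behind Zelmanov's solution of the restricted Burnside problem. Recall that (in characteristic $\neq 2,3$; the general case needs the super-refinements mentioned below) an element $c\in L$ is a \emph{sandwich} if $(\mathrm{ad}\,c)^2=0$ and $\mathrm{ad}(c)\,\mathrm{ad}(x)\,\mathrm{ad}(c)=0$ for every $x\in L$. Since $L$ is generated by the finite set $X$, it suffices to prove that $L$ is nilpotent. The entire argument is a mechanism for first \emph{producing} nonzero sandwiches and then exploiting the deep fact that Lie algebras generated by sandwiches are nilpotent; the ad-nilpotency hypothesis on commutators in $X$ is what ultimately forbids any semisimple behaviour.

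Two cornerstones must be put in place. The first is the Kostrikin--Zelmanov theorem: a Lie algebra over a field generated by finitely many sandwiches is nilpotent, of class bounded in terms of the number of generating sandwiches; consequently the ideal $T$ generated by \emph{all} sandwiches of $L$ is locally nilpotent, since any finitely generated subalgebra of $T$ lies in the subalgebra generated by the finitely many sandwiches occurring in its generators. The second cornerstone is the structure theory of \emph{nondegenerate} PI Lie algebras (those, such as $\bar L:=L/T$, having no nonzero sandwiches): a nondegenerate Lie algebra satisfying a polynomial identity is semiprime, so every nonzero such algebra has a nonzero prime homomorphic image, and---by Zelmanov's Lie analogue of Posner's theorem---the central closure of a prime PI Lie algebra is finite-dimensional simple over the field of fractions of its centroid.

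The central and by far hardest step is the \emph{construction of sandwiches}. Here one linearizes both the polynomial identity and the ad-nilpotency of the commutators in $X$, working inside the relatively free algebra of the corresponding variety, and shows that a PI Lie algebra generated by ad-nilpotent elements which is not nilpotent must contain a nonzero sandwich. The proof is a delicate combinatorial analysis of the multilinear components of the free algebra; to treat the small characteristics $2$ and $3$ and to force the defining quadratic relations of a sandwich, one passes to $\Z/2$-graded (super) versions of the identities. Turning the purely qualitative Engel-type hypothesis into the explicit vanishing relations $(\mathrm{ad}\,c)^2=0$ and $\mathrm{ad}(c)\,\mathrm{ad}(x)\,\mathrm{ad}(c)=0$ is the main obstacle, and it is precisely this step that carries the full weight of Zelmanov's machinery.

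With both cornerstones available the argument closes quickly. Consider $\bar L=L/T$, which is nondegenerate and inherits the hypotheses: it satisfies a polynomial identity and is generated by the images of $X$, whose commutators are ad-nilpotent. If $\bar L\neq 0$, the structure theory provides a nonzero prime quotient whose central closure is a finite-dimensional simple Lie algebra; this quotient is still generated by a set whose commutators are ad-nilpotent, and in finite dimensions this condition forces nilpotency (a nonzero finite-dimensional simple Lie algebra always has, in any generating set, a commutator that is not ad-nilpotent, as one sees from its root-space decomposition---for example the element $[e,f]$ in $\mathfrak{sl}_2$). A nonzero simple algebra cannot be nilpotent, a contradiction; hence $\bar L=0$, that is, $L=T$. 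Thus $L$ coincides with the ideal generated by its sandwiches, so $L$ is locally nilpotent by the first cornerstone, and since $L$ is finitely generated it is nilpotent, as required.
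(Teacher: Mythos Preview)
The paper does not prove Theorem~\ref{tz} at all: it is quoted as a deep result of Zelmanov with references \cite{ze92,ze95,ze17}, and is then used as a black box in the proof of Proposition~\ref{pr-pro-p}. So there is no ``paper's own proof'' to compare your attempt against.

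That said, your sketch is a recognisable outline of Zelmanov's actual strategy (sandwich elements, the Kostrikin--Zelmanov local nilpotency theorem, the structure theory of nondegenerate PI Lie algebras). You are also honest that the hard combinatorial step---manufacturing nonzero sandwiches from the PI and ad-nilpotency hypotheses---is where the real work lies and that you are not reproducing it. One point, however, is genuinely shaky: your closing step asserts that ``a nonzero finite-dimensional simple Lie algebra always has, in any generating set, a commutator that is not ad-nilpotent, as one sees from its root-space decomposition.'' The $\mathfrak{sl}_2$ example and root-space reasoning are fine in characteristic~$0$, but over fields of positive characteristic the classification of simple Lie algebras is much wilder (Cartan-type, Melikian algebras, etc.), there is no uniform root-space decomposition to invoke, and your parenthetical does not settle the matter. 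In Zelmanov's argument this step is handled differently: one does not pass to an abstract simple quotient and argue by classification; rather, the sandwich-production machinery is applied directly to show that a non-nilpotent PI algebra with the stated ad-nilpotency hypothesis already contains nonzero sandwiches, so that $L=T$ follows without the detour through simple quotients. As written, your final paragraph has a gap in positive characteristic, even granting the two ``cornerstones''.
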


We now consider the case of pro-$p$ groups.

\begin{proposition}
\label{pr-pro-p}
Suppose that $P$ is a finitely generated pro-$p$ group in which the word $[x,{}_ny]$ has strictly less than $2^{\aleph_0}$ values.  Then $P$ is nilpotent.
\end{proposition}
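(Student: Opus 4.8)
The plan is to run the Lie ring machinery of Wilson and Zelmanov, much as in the proof of Theorem~\ref{t-wz}. Write $P=P_1\ge P_2\ge\cdots$ for the Zassenhaus $p$-filtration of $P$, $D_p(P)=\bigoplus_i P_i/P_{i+1}$ for the associated Lie algebra over $\F_p$, and $L=L_p(P)$ for the Lie subalgebra of $D_p(P)$ generated by the first factor $P/P_2=P/\Phi(P)$. Since $P$ is finitely generated, $P/\Phi(P)$ is finite, so $L$ is generated by finitely many homogeneous elements $\bar g_1,\dots,\bar g_d$ of degree~$1$ (the images of topological generators). The heart of the proof is to show that $L$ is nilpotent, and then to upgrade this to nilpotency of $P$.

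\emph{Step 1: a coset identity, hence $L$ is PI.} Apply Proposition~\ref{pr-dks} to the continuous map $\f\colon P\times P\to P$, $(h,g)\mapsto[h,{}_ng]$, whose image has cardinality $<2^{\aleph_0}$: there is a non-empty open subset of $P\times P$ on which $\f$ is constant, and after shrinking it we obtain an open normal subgroup $H$, elements $a,b\in P$ and an element $s\in P$ with $[ax_1,{}_n(bx_2)]=s$ for all $x_1,x_2\in H$. Hence $P$ satisfies the coset identity $[ax_1,{}_n(bx_2)]\,[ax_1',{}_n(bx_2')]^{-1}=1$ on cosets of the finite-index subgroup $H$, and by Theorem~\ref{t-coset} the Lie algebra $L$ satisfies a polynomial identity.

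\emph{Step 2: ad-nilpotency of commutators (the crux).} By Zelmanov's Theorem~\ref{tz}, it now suffices to prove that every Lie commutator $c$ in $\bar g_1,\dots,\bar g_d$ is ad-nilpotent on $L$. Such a $c$ is the image in $P_r/P_{r+1}$ of an iterated group commutator $g$ in $g_1,\dots,g_d$; we may assume $c\ne0$, so $g\in\gamma_r(P)\setminus P_{r+1}$. Here the few-values hypothesis must be used essentially: applying Lemma~\ref{l1} to $g$ produces a coset $Nb$ on which $h\mapsto[h,{}_ng]$ is constant, with value some $s$, and Lemma~\ref{l2} furnishes a power $g^k$ centralizing $s$. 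One then translates to the graded algebra via $[\,zP_{j+1},{}_m\bar g\,]=[z,{}_mg]\,P_{j+mr+1}$ for $z\in P_j$, and must show that, for every homogeneous element $\eta$, $(\mathrm{ad}\,c)^m(\eta)=0$ for all large $m$ with a bound independent of $\eta$. The mechanism is that, by the coset condition, all sufficiently long Engel commutators $[z,{}_mg]$ are forced into the deeper filtration terms $P_{j+mr+1}$ --- were this to fail one could extract too many $n$-Engel values in $P$ --- and a careful bookkeeping of the Zassenhaus filtration, together with a passage to finite quotients $P/P_i$ to make the data uniform, yields $(\mathrm{ad}\,c)^M=0$ on $L$. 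This is the technically hardest part. Indeed, in the classical case where $P$ is genuinely $n$-Engel, the identity $[z,{}_ng]=1$ (valid for all $z$) immediately forces $\mathrm{ad}\,\bar g$ to be nilpotent of index $\le n$; here we only have a coset identity and a ``partial'' Engel condition valid on one coset, and the centralizing power $g^k$ of Lemma~\ref{l2} may be divisible by $p$, so that its image in the graded algebra vanishes and carries no information there. Wringing genuine ad-nilpotency of $L$ out of this weaker data, while controlling the filtration and the non-uniformity in $g$, is where the real work of the proof is concentrated.

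\emph{Step 3: from $L$ nilpotent to $P$ nilpotent.} By Theorem~\ref{tz}, Steps~1 and~2 give that $L$ is nilpotent, hence finite-dimensional. Arguing as in the proof of Theorem~\ref{t-wz} (which rests on Lazard's theory of analytic pro-$p$ groups), $P$ is then $p$-adic analytic. A final use of the hypothesis forces $P$ to be nilpotent: a non-nilpotent $p$-adic analytic pro-$p$ group with fewer than $2^{\aleph_0}$ Engel values would admit a continuum of distinct $n$-Engel values, arising either from a non-identically-zero Engel word on its $\Q_p$-Lie algebra (an Engel Lie algebra over a field of characteristic $0$ being nilpotent) or from the action of a torsion element on a uniform open subgroup, which is impossible since $\Q_p$ is uncountable. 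Therefore $P$ is nilpotent.
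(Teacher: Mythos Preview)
Your Steps~1 and~2 follow the same architecture as the paper: obtain a coset identity to make $L_p(P)$ PI via Theorem~\ref{t-coset}, then establish ad-nilpotency of commutators so that Zelmanov's Theorem~\ref{tz} applies. The paper, like you, does not spell out the ad-nilpotency argument in full but defers it to \cite{khu-shu191} (its Lemmas~3.6--3.7), with Lemma~\ref{l2} supplying exactly the input needed there. So your outline of Steps~1--2 is on target, and your honest flagging of the $p\mid k$ obstruction is apt.

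Step~3 is where your proposal diverges and develops a genuine gap. The paper does \emph{not} argue directly with the $\Q_p$-Lie algebra or with torsion elements. Instead, once $P$ is $p$-adic analytic it invokes the Breuillard--Gelander theorem \cite[Theorem~8.3]{br-ge}: a $p$-adic analytic group satisfying a coset identity (which you already have from Step~1) is soluble. Then it runs an induction on the derived length: given an abelian normal $U$ with $P/U$ nilpotent, Lemma~\ref{l1} produces a coset $Nb$ on which $[\,\cdot\,,{}_na]$ is constant; because $U$ is abelian one may take $b\in U$ and subtract to get $[m,{}_na]=1$ for all $m\in U\cap N$, whence $a$ is Engel on $U\langle a\rangle$ (finite index plus pro-$p$), so $P$ is Engel and Theorem~\ref{t-wz} finishes. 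Your alternative route is not complete: even if the $\Q_p$-Lie algebra is $n$-Engel and hence nilpotent, this only gives a nilpotent open uniform subgroup, and a finitely generated pro-$p$ group with an open nilpotent normal subgroup need not be nilpotent (think of $\Z_p\rtimes C_p$ with $C_p$ acting by $1+p$; of course that example violates the hypothesis, but your argument as written does not exclude it). Your ``torsion element on a uniform subgroup'' clause is too vague to close this gap. The clean fix is to insert Breuillard--Gelander and then carry out the derived-length induction exactly as the paper does.
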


\begin{proof}
We shall first prove that the Lie algebra $L_p(P)$ is nilpotent, using Theorem~\ref{tz}. The next lemma confirms that the hypotheses in Theorem~\ref{tz} are satisfied.

\begin{lemma}\label{l-ad}
The Lie algebra $L_p(P)$ satisfies a polynomial identity and is generated by finitely many elements all commutators in which are ad-nilpotent.
\end{lemma}

\begin{proof}
The proof of this lemma is obtained by repeating word-for-word the proofs of Lemmas~3.6 and 3.7 in \cite{khu-shu191}, where Lemma~2.7 in \cite{khu-shu191} is replaced with Lemma~\ref{l2} in the present paper, which provides exactly the same result as  in \cite{khu-shu191} under the hypotheses of Proposition~\ref{pr-pro-p}.
\end{proof}

We now resume the proof of Proposition~\ref{pr-pro-p}. Lemma~\ref{l-ad} together with Theorem~\ref{tz} show that  $L_p(P)$ is nilpotent.
The nilpotency of the Lie algebra $L_p (P)$  of the finitely generated pro-$p $ group $P$  implies that $P$ is a $p $-adic analytic group. This result goes back to Lazard~\cite{laz}; see also \cite[Corollary~D]{sha}.
Furthermore, by a theorem of Breuillard and Gelander \cite[Theorem~8.3]{br-ge}, a  $p $-adic analytic group satisfying a coset identity on cosets of a subgroup of finite index is soluble.

Thus, $P$ is soluble, and we prove that $P$ is nilpotent by induction on the derived length of $P$.  By induction hypothesis, $P$ has an abelian normal subgroup $U$ such that $P/U$ is  nilpotent. We aim to show that $P$ is an Engel group. Since $P/U$ is nilpotent, it is sufficient to show that every element $a\in P$ is an Engel element in the product $U\langle a\rangle$.

Applying Lemma~\ref{l1} to $U\langle a\rangle$ we obtain a coset $Nb$ of an open normal subgroup $N$ of $U\langle a\rangle$ and an element $s\in U$ such that
$$
[xb,{}_na]=s\qquad \text{for all}\;\,x\in N.
$$
  Since $[a^iux,a]=[ux,a]$ for any $u\in U$, $x\in N$, we can assume that $b\in U$. Then for any $m\in U\cap N$ we have
$$
s=[mb,{}_na]=[m,{}_na]\cdot [b,{}_na]=[m,{}_na]\cdot s,
$$
since $U$ is abelian. Hence, $[m,{}_na]=1$ for any $m\in U\cap N$. Since $U\cap N$ has finite index in $U$ and $U\langle a\rangle$ is a pro-$p$ group, it follows that $a$ is an Engel element of  $U\langle a\rangle$.

Thus, $P$ is an Engel group and therefore, being a finitely generated pro-$p$ group,  $P$ is nilpotent by
Theorem~\ref{t-wz}.
\end{proof}

\begin{proof}[Proof of Theorem~\ref{t2}] By
Theorem~\ref{t-wz},  it is sufficient to prove that $G$ is an Engel group.
For each prime~$p$, let $G_p$ denote the Sylow $p$-subgroup of $G$, so that $G$ is a Cartesian product of the~$G_p$, since $G$ is pronilpotent. Given any two elements $a,g\in G$, we write $g=\prod _pg_p$ and $a=\prod _pa_p$,  where $a_p,g_p\in G_p$. Clearly, $[g_q,a_p]=1$ for $q\ne p$.

By Lemma~\ref{l2}, for the element $a\in G$ there is a positive integer $k$ and a coset $Nb$ of an open normal subgroup $N$  such that
\begin{equation}\label{e-eng2}
[[xb,{}_na],a^{k}]=1\qquad \text{for all}\quad x\in N.
\end{equation}
Let $l$ be the (finite) index of $N$ in $G$. Then $N$ contains all Sylow $q$-subgroups of $G$ for $q\not\in \pi (l)$. Hence we can choose $b$ to be a $\pi(l)$-element. Let $\pi=\pi (l)\cup \pi (k)$; note that $\pi$ is a finite set of primes.

We claim that
$$
[g_q,{}_{n+1}a_q]=1\qquad \text{for }q\not\in \pi.
$$
 Indeed, since $b$ commutes with elements of $G_q$ and $G_q\leq N$,  by \eqref{e-eng2} we have
\begin{equation}\label{eq-engq2}
\begin{aligned}
 1=[g_qb,{}_na],a^{k}]
 & = [[g_q,{}_na],a^{k}]\cdot  [[b,{}_na],a^{k}]\\
 & =[[g_q,{}_na],a^{k}]\\
 &= [[g_q,{}_na_q],a_q^{k}].
 \end{aligned}
\end{equation}
 Thus, $a_q^{k}$ centralizes $[g_q,{}_na_q]$. Since $k$ is coprime to $q$, we have $ \langle a_q^{k}\rangle =\langle a_q\rangle$. Therefore \eqref{eq-engq2} implies that $[[g_q,{}_na_q],a_q]=1$, as claimed.

 For every prime $p$ the group $G_p$ is locally nilpotent by Proposition~\ref{pr-pro-p},  so there is $k_p$ such that $[g_p,{}_{k_p}a_p]=1$.  Now for $m=\max\{n+1, \max_{p\in \pi} \{k_p\}\}$ we have $[g_p,{}_{m}a_p]=1$ for all $p$, which means that  $[g,{}_{m}a]=1$.
 Thus, $G$ is an Engel group and therefore it is locally nilpotent by
 Theorem~\ref{t-wz}.
\end{proof}

\section{Coprime actions}

In this section,  first we list several profinite  analogues of the properties of coprime automorphisms of finite groups. Then we prove two lemmas on coprime automorphisms in relation to Engel sinks and  values of Engel words.

 If $\varphi$  is an automorphism of a finite group $H$ of coprime order, that is, such that $(|\varphi |,|H|)=1$, then  we say for brevity that $\varphi$ is a coprime automorphism of~$H$. This definition is extended to profinite groups as follows.
We say that $\varphi$ is a \textit{coprime automorphism}  of a profinite group $H$  meaning that a procyclic group $\langle\varphi\rangle$ faithfully acts on $H$ by continuous automorphisms   and $\pi (\langle \varphi\rangle)\cap \pi (H)=\varnothing$. Since the semidirect product $H\langle \varphi\rangle$ is also a profinite group, $\varphi$ is a coprime automorphism of $H$
 if and only if for every open normal $\varphi$-invariant subgroup $N$ of $H$ the automorphism (of finite order) induced by $\varphi$ on $H/N$ is a coprime automorphism.
The following folklore lemma follows from the Sylow theory for profinite groups and an analogue of the Schur--Zassenhaus theorem (see, for example, {\cite[Lemma~4.1]{khu-shu191}).

\begin{lemma}\label{l-inv}
If $\varphi$ is a coprime automorphism of a profinite group $G$, then for every prime $q\in \pi (G)$ there is a $\varphi$-invariant Sylow $q$-subgroup of $G$. If $G$ is in addition prosoluble, then for every subset $\sigma\subseteq \pi (G)$ there is a $\varphi$-invariant Hall $\sigma$-subgroup of~$G$.
\end{lemma}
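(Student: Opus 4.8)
The plan is to deduce both assertions from the Sylow theory for profinite groups and (for the second) the Hall--Chunikhin theory for prosoluble profinite groups, together with the profinite Schur--Zassenhaus theorem, by a Frattini-argument trick carried out inside a semidirect product.

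First I would form the profinite group $\Gamma=G\rtimes\langle\varphi\rangle$, in which $G$ is a closed normal subgroup with $\Gamma/G\cong\langle\varphi\rangle$ procyclic; in particular $\Gamma/G$ is prosoluble, and when $G$ is prosoluble so is $\Gamma$ by Lemma~\ref{l-prosol-by-prosol}. The coprimality hypothesis is precisely $\pi(G)\cap\pi(\langle\varphi\rangle)=\varnothing$. Now fix a prime $q\in\pi(G)$ and a Sylow $q$-subgroup $Q$ of $G$. Since all Sylow $q$-subgroups of $G$ are conjugate in $G$, the Frattini argument gives $\Gamma=G\,N_\Gamma(Q)$, whence $N_\Gamma(Q)/N_G(Q)\cong\Gamma/G\cong\langle\varphi\rangle$, where $N_G(Q)=N_\Gamma(Q)\cap G$ is a normal subgroup of $N_\Gamma(Q)$ with $\pi(N_G(Q))\cap\pi\bigl(N_\Gamma(Q)/N_G(Q)\bigr)=\varnothing$.

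At this point I would invoke the profinite Schur--Zassenhaus theorem: $N_G(Q)$ has a complement $C$ in $N_\Gamma(Q)$. Because $N_\Gamma(Q)G=\Gamma$ while $C\cap G\leq N_\Gamma(Q)\cap G=N_G(Q)$ and $C\cap N_G(Q)=1$, the subgroup $C$ is in fact a complement to $G$ in $\Gamma$, isomorphic to $\langle\varphi\rangle$. Since $\langle\varphi\rangle$ is another complement to $G$ in $\Gamma$ and $\Gamma/G$ is procyclic, hence prosoluble, the conjugacy part of Schur--Zassenhaus provides $g\in\Gamma$ with $\langle\varphi\rangle^{g}=C\leq N_\Gamma(Q)$. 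Then $\langle\varphi\rangle\leq N_\Gamma(Q)^{g^{-1}}=N_\Gamma\bigl(Q^{g^{-1}}\bigr)$, and since $Q^{g^{-1}}\leq G$ it is a $\varphi$-invariant Sylow $q$-subgroup of $G$. For the second assertion one repeats the same steps with $Q$ replaced by a Hall $\sigma$-subgroup $H$ of the prosoluble group $G$, using that Hall $\sigma$-subgroups of a prosoluble profinite group exist and are conjugate.

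The only step needing real care is the appeal to the Schur--Zassenhaus theorem over profinite groups, in both its existence and its conjugacy form; the conjugacy of complements, which in general requires the solubility of one of the two factors, is available here precisely because $\Gamma/G$ is procyclic. Everything else is routine bookkeeping. (An alternative route avoids $\Gamma$ altogether: the $\varphi$-invariant open normal subgroups of $G$ are cofinal, the statement for finite coprime actions is classical, images of Sylow and Hall subgroups under quotient maps are again Sylow and Hall, and a compactness/inverse-limit argument over the finite quotients $G/N$ then yields the required subgroup of $G$.)
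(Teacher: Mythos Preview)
Your argument is correct and is precisely the standard Frattini/Schur--Zassenhaus argument the paper alludes to: the paper does not give a proof but states the lemma as folklore following from Sylow theory for profinite groups and an analogue of the Schur--Zassenhaus theorem, citing \cite[Lemma~4.1]{khu-shu191}. Your write-up simply spells out that route in detail, so there is nothing to compare.
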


The following lemma is a special case of \cite[Proposition~2.3.16]{rib-zal}.

\begin{lemma}[{\cite[Lemma~4.2]{khu-shu191}}] \label{l-cover}
If $\varphi $ is a coprime automorphism of a profinite group $G$ and $N$ is a $\varphi $-invariant closed normal subgroup of $G$, then every fixed point of $\varphi$ in $G/N$ is an image of a fixed point of $\varphi$ in $G$, that is, $C_{G/N}(\varphi)=C(\varphi )N/N$.
\end{lemma}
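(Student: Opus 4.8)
The plan is to deduce the lemma from the classical finite coprime-action fixed-point lemma by an inverse-limit argument. The inclusion $C_G(\varphi)N/N\le C_{G/N}(\varphi)$ is trivial, so the content is the reverse one: given $\bar g\in C_{G/N}(\varphi)$, I need to produce $h\in G$ with $hN=gN$ and $h^\varphi=h$.

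First I would set up a suitable family of approximating subgroups. Since $G\langle\varphi\rangle$ is profinite, the subgroups $V\cap G$, with $V$ running over the open normal subgroups of $G\langle\varphi\rangle$, form a base of neighbourhoods of $1$ in $G$ consisting of open normal $\varphi$-invariant subgroups; intersecting each of these with $N$, I obtain a family $\mathscr U$ of open normal $\varphi$-invariant subgroups of $G$, each contained in $N$, closed under finite intersections, and with $\bigcap_{U\in\mathscr U}U=1$. For every $U\in\mathscr U$ the automorphism induced by $\varphi$ on the finite group $G/U$ has all its prime divisors among those of $\langle\varphi\rangle$, hence is a coprime automorphism of $G/U$ in the usual sense, and $N/U$ is $\varphi$-invariant in $G/U$. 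Applying the classical finite lemma to $G/U$ with the $\varphi$-invariant normal subgroup $N/U$, and using $(G/U)/(N/U)\cong G/N$, gives $\bar g\in C_{G/U}(\varphi)\,(N/U)/(N/U)$.

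Consequently, for every $U\in\mathscr U$ the set
\[
 Y_U=\{\,h\in G\ :\ hN=gN\ \text{ and }\ hU\in C_{G/U}(\varphi)\,\}
\]
is non-empty, and it is closed, being the intersection of the closed set $gN$ with the full preimage in $G$ of $C_{G/U}(\varphi)$. Since $U_1\cap U_2\subseteq U_i$ implies $Y_{U_1\cap U_2}\subseteq Y_{U_i}$, any finite subfamily of $\{Y_U\}_{U\in\mathscr U}$ has intersection containing some $Y_{U'}\neq\varnothing$, so this is a family of non-empty closed subsets of the compact space $G$ with the finite intersection property; hence there is $h\in\bigcap_{U\in\mathscr U}Y_U$. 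For such $h$ one has $hN=gN$ and $h^{-1}h^\varphi\in U$ for all $U\in\mathscr U$, hence $h^{-1}h^\varphi\in\bigcap_{U\in\mathscr U}U=1$. Thus $h\in C_G(\varphi)$ and $\bar h=\bar g$, which proves $C_{G/N}(\varphi)\le C_G(\varphi)N/N$.

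The only genuinely non-routine ingredient is the finite coprime-action lemma itself, which can be obtained, for instance, from Glauberman's lemma applied to the action of $(N/U)\langle\varphi\rangle$ on a $\varphi$-invariant coset of $N/U$ (using that one of the two coprime orders is odd, hence soluble); alternatively, one can avoid the inverse-limit passage altogether and quote directly the corresponding statement for coprime actions on profinite groups, namely \cite[Proposition~2.3.16]{rib-zal}, of which the present lemma is a special case. The only point requiring care is to make sure the approximating subgroups are simultaneously open, normal in $G$, contained in $N$, and $\varphi$-invariant, so that the finite lemma applies to each quotient and the compactness limit genuinely lies in $C_G(\varphi)$.
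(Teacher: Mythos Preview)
The paper does not actually prove this lemma: it merely records that it is a special case of \cite[Proposition~2.3.16]{rib-zal}. You note this very citation yourself as an alternative route, so in that sense your proposal already subsumes the paper's ``proof''.

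Your inverse-limit reduction to the finite coprime-action lemma is the natural argument, but there is a slip in the setup. You write that by intersecting each $V\cap G$ with $N$ you obtain ``open normal $\varphi$-invariant subgroups of $G$, each contained in $N$''. If $N$ is merely closed and not open, then $(V\cap G)\cap N$ is open in $N$ but not in $G$, so $G/U$ is not finite and the finite lemma does not apply; moreover, an open subgroup of $G$ contained in $N$ need not exist at all. The remedy is standard: keep $U=V\cap G$ open, normal and $\varphi$-invariant in $G$, and apply the finite lemma to $G/U$ with the $\varphi$-invariant normal subgroup $NU/U$, using $(G/U)/(NU/U)\cong G/NU$ and the fact that the image of $g$ in $G/NU$ is fixed by $\varphi$. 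Then set
\[
Y_U=\{\,h\in G: h\in gNU\ \text{and}\ hU\in C_{G/U}(\varphi)\,\},
\]
which is non-empty and closed; the finite-intersection/compactness step goes through as you wrote, and any $h\in\bigcap_U Y_U$ satisfies $h\in C_G(\varphi)$ and $h\in\bigcap_U gNU=gN$ because $N$ is closed. With this correction your argument is complete.
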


As a consequence of Lemma~\ref{l-cover}, we also have the following folklore lemma.

\begin{lemma}[{\cite[Lemma~4.3]{khu-shu191}}]  \label{l-gff}
If $\varphi $ is a coprime automorphism of a profinite group $G$, then
$[[G,\varphi],\varphi]=[G,\varphi]$.
\end{lemma}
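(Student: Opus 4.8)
\textbf{Plan for proving Lemma~\ref{l-gff}.}

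The plan is to derive this from Lemma~\ref{l-cover} by the standard "three-subgroup/commutator stabilization under coprime action" argument, adapted to the profinite setting. First I would set $H=[G,\varphi]$. This is a $\varphi$-invariant closed normal subgroup of $G$, so $\varphi$ acts as a coprime automorphism on $H$, and clearly $[H,\varphi]\le H$; the content is the reverse inclusion $H\le [H,\varphi]$, equivalently $[G,\varphi]\le[[G,\varphi],\varphi]$. Write $K=[[G,\varphi],\varphi]$, a $\varphi$-invariant closed normal subgroup of $G$ contained in $H$. I would then pass to the quotient $\bar G=G/K$ and show $[\bar G,\varphi]=1$, i.e. $\varphi$ acts trivially on $[G,\varphi]K/K=H/K$; this immediately gives $H=[H,\varphi]K\le[H,\varphi]$ after one more step, or more cleanly it gives $[G,\varphi]\le K$ directly once we know $\varphi$ centralizes $H/K$, since then $[H,\varphi]\le K$ forces $H/K$ to equal $[H/K,\varphi]=1$... let me instead run the cleanest version.

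The cleanest route: in $\bar G = G/K$ the subgroup $\bar H = HK/K$ satisfies $[\bar H,\varphi]=[H,\varphi]K/K \le K/K = 1$ by definition of $K$, so $\bar H \le C_{\bar G}(\varphi)$. On the other hand, applying Lemma~\ref{l-cover} to the pair $\varphi$ and $K \trianglelefteq G$ gives $C_{G/K}(\varphi) = C_G(\varphi)K/K$. Now I compute $[\bar G,\varphi]$: since $C_{\bar G}(\varphi) = C_G(\varphi)K/K$ contains $\bar H$, and $[\bar G,\varphi] = [\bar G, \varphi]$... the point is that $[\bar G,\varphi] \le \bar H$ always, because $[G,\varphi]=H$ and $K \le H$, so $[\bar G,\varphi] = [G,\varphi]K/K = HK/K = \bar H$. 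Combining, $\bar H = [\bar G,\varphi] \le C_{\bar G}(\varphi)$ would only say $[\bar H,\varphi]\le$ something; I need to instead observe directly that $[\bar G, \varphi] = \bar H$ and $[[\bar G,\varphi],\varphi] = [\bar H,\varphi] = 1$, and then invoke the elementary fact that for a coprime automorphism $\psi$ of a profinite group $M$, $[M,\psi,\psi]=1$ implies $[M,\psi]=1$. This last fact follows from Lemma~\ref{l-cover} applied with $N=[M,\psi]$: then $C_{M/[M,\psi]}(\psi)=M/[M,\psi]$ (the quotient is trivially centralized, as $[M,\psi,\psi]=1$ means $\psi$ centralizes $[M,\psi]$... no — it means $\psi$ centralizes $[M,\psi]$, hence $M/[M,\psi]$ need not be centralized). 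I would therefore use the correct standard lemma: if $\psi$ is coprime on $M$ and $[M,\psi,\psi]=[M,\psi]$ then nothing new, but if $C_M(\psi)$ and $[M,\psi]$ together generate $M$ (always true under coprimeness by Lemma~\ref{l-cover}: $M=C_M(\psi)[M,\psi]$), then $[M,\psi]=[C_M(\psi)[M,\psi],\psi]=[[M,\psi],\psi]$, which is exactly what we want and needs no quotient at all.

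So the real proof is one line once we have $M = C_M(\psi)[M,\psi]$ for coprime $\psi$: apply it with $M=[G,\varphi]$, $\psi=\varphi$. We get $[G,\varphi] = [[G,\varphi],\varphi]$ provided $[G,\varphi] = C_{[G,\varphi]}(\varphi)\cdot[[G,\varphi],\varphi]$, because then $[[G,\varphi],\varphi] = [C\cdot[[G,\varphi],\varphi], \varphi] \supseteq [[G,\varphi],\varphi]$ — I need $[C_M(\psi), \psi]=1$, which holds by definition, giving $[M,\psi] = [C_M(\psi)[M,\psi],\psi] = [[M,\psi],\psi]$ by expanding the commutator of a product. The identity $M = C_M(\psi)[M,\psi]$ itself comes from Lemma~\ref{l-cover}: in $M/[M,\psi]$ the automorphism $\psi$ acts trivially (since $[M/[M,\psi],\psi]=[M,\psi][M,\psi]/[M,\psi]=1$), so $C_{M/[M,\psi]}(\psi)=M/[M,\psi]$, and by Lemma~\ref{l-cover} this equals $C_M(\psi)[M,\psi]/[M,\psi]$, whence $M=C_M(\psi)[M,\psi]$. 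The main (minor) obstacle is simply organizing these two applications of Lemma~\ref{l-cover} correctly and being careful that all subgroups in sight are closed and $\varphi$-invariant so that the profinite versions apply verbatim; there is no serious difficulty here, which is why the authors call it folklore.
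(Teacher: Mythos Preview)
Your final argument is correct and is exactly the folklore derivation from Lemma~\ref{l-cover} that the paper has in mind (the paper gives no details beyond citing Lemma~\ref{l-cover}): one shows $M=C_M(\varphi)[M,\varphi]$ for a coprime automorphism $\varphi$ of a profinite group $M$, whence $[M,\varphi]=[C_M(\varphi)[M,\varphi],\varphi]=[[M,\varphi],\varphi]$. One small slip: this identity with $M=G$ already \emph{is} the lemma, so there is no need to ``apply it with $M=[G,\varphi]$'' --- doing so would yield only $[[G,\varphi],\varphi]=[[[G,\varphi],\varphi],\varphi]$, one level too deep.
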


The following lemma is a consequence of \cite[Lemma~2.4]{rod-shu}.

 \begin{lemma}[{\cite[Lemma~4.6]{khu-shu191}}] \label{l-copr1}
 Let $\varphi$ be a coprime automorphism of a pronilpotent
group~$G$. Then the restriction of the mapping
$$
\theta: x\mapsto [x,\varphi]
$$
to the set $K=\{ [g,\varphi]\mid g\in G\}$ is injective.
 \end{lemma}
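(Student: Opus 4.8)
The plan is to reduce the statement to finite $p$-groups, where I would prove the sharper fact that $G=C_G(\varphi)K$, and then deduce injectivity of $\theta|_K$ from this by a counting argument. The starting point is a description of the fibres of $\theta$. Using the action convention $[x,\varphi]=x^{-1}x^{\varphi}$, a one-line manipulation shows that $[x,\varphi]=[y,\varphi]$ holds if and only if $yx^{-1}\in C_G(\varphi)$; thus the fibres of $\theta$ are precisely the left cosets of $C_G(\varphi)$, and $\theta$ induces a bijection of the coset space onto $K$. In particular, for a \emph{finite} group $G$ one has $|K|=[G:C_G(\varphi)]$, so $\theta|_K$ is injective as soon as $K$ meets every left coset of $C_G(\varphi)$, that is, as soon as $G=C_G(\varphi)K$: if $K$ met every coset but met some coset twice, it would miss another, contradicting the equality of cardinalities.

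Next come two reductions. Commutators with $\varphi$ are compatible with passage to quotients, and the $\varphi$-invariant open normal subgroups of $G$ form a base of neighbourhoods of $1$ (the $\langle\varphi\rangle$-orbit of any open normal subgroup is finite, since a profinite group has only finitely many open subgroups of a given index, so one may intersect over the orbit); hence it suffices to prove that $\theta|_K$ is injective when $G$ is finite and nilpotent. By the previous paragraph this amounts to showing $G=C_G(\varphi)K$; and since a finite nilpotent group is the direct product of its $\varphi$-invariant Sylow subgroups, along which both $K$ and $C_G(\varphi)$ split, it is enough to prove $G=C_G(\varphi)K$ in the case where $G$ is a finite $p$-group and $\varphi$ has order prime to $p$.

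For such $G$ I would argue by induction on $|G|$, the case $|G|=1$ being vacuous. Otherwise $Z=Z(G)$ is a nontrivial $\varphi$-invariant subgroup, and by induction $G/Z=C_{G/Z}(\varphi)\,\overline{K}$, where $\overline{K}=\{[\bar g,\varphi]\mid \bar g\in G/Z\}$. By Lemma~\ref{l-cover} we have $C_{G/Z}(\varphi)=C_G(\varphi)Z/Z$, so every $g\in G$ may be written $g=ckz$ with $c\in C_G(\varphi)$, $k=[h,\varphi]\in K$ and $z\in Z$. The heart of the argument is to absorb the central factor $z$. Since $\varphi$ acts coprimely on the abelian group $Z$, we have $Z=C_Z(\varphi)\times[Z,\varphi]$ with $\varphi-1$ invertible on $[Z,\varphi]$; write $z=z_0z_1$ accordingly, so that $z_0\in C_G(\varphi)$ and $z_1=[w,\varphi]$ for some $w\in Z$. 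As $w$ is central, so is $w^{-1}w^{\varphi}=[w,\varphi]$, whence
\[
z_1k=[w,\varphi]\,[h,\varphi]=\bigl(w^{-1}w^{\varphi}\bigr)h^{-1}h^{\varphi}=h^{-1}\bigl(w^{-1}w^{\varphi}\bigr)h^{\varphi}=[wh,\varphi]\in K.
\]
Since $z_0$ and $z_1$ are central, $g=ckz=(cz_0)(z_1k)\in C_G(\varphi)K$, which completes the induction; combined with the reductions, this proves the lemma.

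The only genuinely delicate point is this absorption step: one must know that a value $[h,\varphi]$ times an appropriate central element is again a value of the commutator map, and it is exactly the coprime splitting of $Z(G)$ together with the invertibility of $\varphi-1$ on $[Z(G),\varphi]$ that produces the element $wh$ doing the job. I remark that the equality $G=C_G(\varphi)K$ cannot be replaced by the tidier-looking $K=[G,\varphi]$: already for the Heisenberg group over $\F_p$ ($p$ odd) with a coprime automorphism acting on $G/Z(G)$ with determinant $1$, the set $K$ is a proper subset of $[G,\varphi]=G$, while $\theta|_K$ is nonetheless injective.
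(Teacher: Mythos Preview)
Your proof is correct. The fibre description, the counting argument reducing injectivity to $G=C_G(\varphi)K$ in the finite case, the reductions to finite nilpotent and then to $p$-groups, and the inductive absorption of the central factor via the coprime splitting $Z=C_Z(\varphi)\times[Z,\varphi]$ all go through as stated. The key identity $z_1k=[wh,\varphi]$ is checked correctly using centrality of $w$ and $w^{\varphi}$.

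As for comparison with the paper: the paper does not actually prove this lemma. It is quoted from \cite[Lemma~4.6]{khu-shu191} and attributed there as a consequence of \cite[Lemma~2.4]{rod-shu}. Your argument is therefore a self-contained replacement for an external citation. The route via the factorisation $G=C_G(\varphi)K$ is slightly stronger than what is needed and has the pleasant feature of making the injectivity transparent through pigeonhole, at the cost of the inductive absorption step; the cited source proceeds differently, but your approach is elementary and uses nothing beyond Lemma~\ref{l-cover} and the standard splitting of an abelian group under a coprime automorphism.
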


We now prove two lemmas where the condition of the word $[x, \,{}_ny]$ having less than $2^{\aleph_0}$ values appears. 

 \begin{lemma}\label{l-copr2}
 Let $\varphi$ be a coprime automorphism of a pronilpotent
group~$G$. Suppose that, for some $n\in \N$, the set $\mathscr E_{G,n}(\varphi )=\{[g, \,{}_n\varphi]\mid g\in G\}$ has less than $2^{\aleph_0}$ elements. Then the set $K=\{ [g,\varphi]\mid g\in G\}$ is a finite smallest Engel sink of $\varphi$ in the semidirect product $G\langle \varphi\rangle$.
 \end{lemma}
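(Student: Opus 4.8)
The plan is to study the map $\theta\colon x\mapsto[x,\varphi]$ and its iterates on $K$, and to recognise $K$, up to an elementary translation, as a single conjugacy class in the profinite group $G\langle\varphi\rangle$. First I would record the cardinality bound: for every $m\ge1$ one has $[g,{}_m\varphi]=\theta^{m-1}([g,\varphi])$, so $\mathscr E_{G,m}(\varphi)=\theta^{m-1}(K)$. Every commutator $[h,\varphi]=h^{-1}h^{\varphi}$ lies in $G$, so $\theta$ carries $K$ into itself; since $\theta|_K$ is injective by Lemma~\ref{l-copr1}, so is $\theta^{n-1}|_K$, and therefore $|K|=|\theta^{n-1}(K)|=|\mathscr E_{G,n}(\varphi)|<2^{\aleph_0}$.

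The next, and principal, step is the finiteness of $K$. From the identity $\varphi^{g}=\varphi\,[g,\varphi]^{-1}$ and the fact that $\varphi$ commutes with $\langle\varphi\rangle$, it follows that the conjugacy class of $\varphi$ in $G\langle\varphi\rangle$ is precisely $\{\varphi^{g}\mid g\in G\}$, a set in bijection with $K$ via $c\mapsto\varphi c^{-1}$; hence this conjugacy class has cardinality $|K|<2^{\aleph_0}$. By Lemma~\ref{l-c} it is finite, and so $K$ is finite. Consequently $\theta|_K$, being an injective self-map of the finite set $K$, is a permutation of $K$.

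It then remains to verify that $K$ is the smallest Engel sink of $\varphi$ in $G\langle\varphi\rangle$. For $x=h\varphi^{i}$ with $h\in G$ one computes $[x,\varphi]=[h,\varphi]^{\varphi^{i}}=[h^{\varphi^{i}},\varphi]\in K$, and then $[x,{}_m\varphi]=\theta^{m-1}([x,\varphi])\in K$ for every $m\ge1$; thus $K$ is an Engel sink of $\varphi$ (with $n(x,\varphi)=1$), finite by the previous paragraph. For minimality, let $\mathscr E$ be any Engel sink of $\varphi$. Since $[x,{}_m\varphi]\in K$ for all $m$ and lies in $\mathscr E$ for all large $m$, the intersection $\mathscr E\cap K$ is again an Engel sink, necessarily finite. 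Given $c=[g,\varphi]\in K$, the elements $\theta^{m-1}(c)=[g,{}_m\varphi]$ lie in $\mathscr E\cap K$ for all large $m$; as $\theta|_K$ is a permutation, these elements run periodically through the finite $\theta$-orbit of $c$, so the whole orbit, and in particular $c$ itself, lies in $\mathscr E\cap K\subseteq\mathscr E$. Hence $K\subseteq\mathscr E$, and $K$ is the smallest Engel sink.

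The delicate step is the second one: a small cardinality by itself does not force finiteness of a compact set (a convergent sequence is a countable compact set), so the real content is the identification of $K$, up to the harmless translation $c\mapsto\varphi c^{-1}$, with a conjugacy class in a profinite group — exactly the situation covered by Lemma~\ref{l-c} (which in turn rests on Proposition~\ref{pr-dks}). Everything else is bookkeeping with standard commutator identities together with the short combinatorial observation about permutations of finite sets used in the last paragraph.
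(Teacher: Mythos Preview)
Your proof is correct and follows essentially the same route as the paper. Both arguments use Lemma~\ref{l-copr1} to transfer the cardinality bound from $\mathscr E_{G,n}(\varphi)$ to $K$, then identify $K$ (you via $c\mapsto\varphi c^{-1}$ with the conjugacy class of $\varphi$, the paper via the bijection with cosets of $C_G(\varphi)$ --- the same thing) so that Lemma~\ref{l-c} yields finiteness, and finally observe that $\theta|_K$ is a permutation of the finite set $K$, which gives the smallest Engel sink; your write-up is simply more explicit about why $K$ is an Engel sink for all of $G\langle\varphi\rangle$ and about the minimality step.
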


\begin{proof}
Since the mapping $\theta: x\mapsto [x,\varphi]$ is injective on the set $K$
by Lemma~\ref{l-copr1}, the sets
$\mathscr E_{G,n}(\varphi )$ and $K$ have the same cardinality, which is less than $2^{\aleph_0}$ by hypothesis. The set $K=\{ [g,\varphi]\mid g\in G\}$ is in a one-to-one correspondence with the set of (say, right) cosets of the centralizer $C_G(\varphi )$. But this set of cosets cannot be infinite of cardinality less than $2^{\aleph_0}$ by Lemma~\ref{l-c}. Therefore it is finite, and so is the set $K$.

 The mapping $[g,\varphi ]\mapsto [g,\varphi,\varphi]$ is injective on $K$ by Lemma~\ref{l-copr1}, and therefore it is also surjective, since $K$ is finite. Hence this set is a smallest finite Engel sink of~$\varphi$.
 \end{proof}

 \begin{lemma} \label{l-copr3}
 Let $\varphi$ be a coprime automorphism of a pronilpotent
group~$G$. Suppose that, for some $n\in \N$, the word $[x,{}_ny]$ has strictly less than $2^{\aleph_0}$ values in the semidirect product $G\langle \varphi\rangle$.
Then $\gamma _{\infty} (G\langle \varphi\rangle)$ is finite  and  $\gamma _{\infty}(G\langle \varphi\rangle)= [G,\varphi]$.
 \end{lemma}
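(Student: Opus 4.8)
The plan is to set $H := G\langle\varphi\rangle$ and $L := [G,\varphi]$, first to identify $\gamma_\infty(H)$ with $L$, and then to prove that $L$ is finite. Note that $L$ is a closed subgroup of $G$, hence pronilpotent; that it is normal in $G$ (because $[g,\varphi]^{g_1} = [gg_1,\varphi]\,[g_1,\varphi]^{-1}$) and $\varphi$-invariant, hence normal in $H$; and that $\varphi$ restricts to a coprime automorphism of $L$. By Lemma~\ref{l-gff}, $[L,\varphi] = [[G,\varphi],\varphi] = L$; iterating this identity gives $[G,{}_k\varphi] = L$ for every $k\ge1$, and since $[G,{}_k\varphi]\le\gamma_{k+1}(H)$ we obtain $L\le\bigcap_{k}\gamma_{k+1}(H) = \gamma_\infty(H)$. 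Conversely, modulo $L$ the automorphism $\varphi$ centralises $G$, so in $H/L$ the image $\bar\varphi$ is central; hence $\gamma_i(H/L) = \gamma_i(G/L)$ for all $i\ge2$ and $\gamma_\infty(H/L) = \gamma_\infty(G/L) = 1$ because $G/L$ is pronilpotent. Thus $\gamma_\infty(H) = L = [G,\varphi]$, which settles the second assertion of the lemma and reduces the first to proving that $L$ is finite.

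Next I would use the hypothesis. Since $\{[g,{}_n\varphi]\mid g\in G\}$ is a subset of the set of values of $[x,{}_ny]$ in $H$, it has fewer than $2^{\aleph_0}$ elements, so Lemma~\ref{l-copr2} applies and $K := \{[g,\varphi]\mid g\in G\}$ is finite. As in the proof of Lemma~\ref{l-copr2}, $K$ is in bijection with the set of right cosets of $C_G(\varphi)$ in $G$; hence $C_G(\varphi)$ has finite index in $G$ and is therefore open. Let $V$ be the core of $C_G(\varphi)$ in $G$: it is open and normal in $G$, it is $\varphi$-invariant (because $C_G(\varphi)$ is), hence normal in $H$, and $[V,\varphi] = 1$. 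Then $V\cap L$ is open in $L$, normal in $L$, and contained in $C_L(\varphi)$; since $L = [L,\varphi]$, Lemma~\ref{l-fng} gives $V\cap L\le Z(L)$. Therefore $Z(L)$ is open in $L$, so $L/Z(L)$ is finite, and by Schur's theorem $\gamma_2(L) = [L,L]$ is finite.

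To finish I would pass to $A := L/\gamma_2(L)$, an abelian pronilpotent group on which $\varphi$ acts coprimely. The image of the open subgroup $C_L(\varphi)$ lies in $C_A(\varphi)$, so $C_A(\varphi)$ is open in $A$; and $[A,\varphi]$, being the image of $[L,\varphi] = L$, equals $A$. On the other hand, for a coprime automorphism of an abelian profinite group $C_A(\varphi)\cap[A,\varphi] = 1$ --- for instance by Lemma~\ref{l-copr1}, since for abelian $A$ the set $\{[a,\varphi]\mid a\in A\}$ is exactly the subgroup $[A,\varphi]$. Hence $C_A(\varphi) = C_A(\varphi)\cap A = 1$, and since $C_A(\varphi)$ is open, $A$ is finite. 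Together with the finiteness of $\gamma_2(L)$ this shows that $L = [G,\varphi] = \gamma_\infty(H)$ is finite, completing the proof.

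The step I expect to need the most care is the passage from ``$C_G(\varphi)$ is open'' to ``$[G,\varphi]$ is finite''. One cannot hope to show that $\varphi$ acts fixed-point-freely on $L = [G,\varphi]$: already for $L = Q_8$ with an automorphism $\varphi$ of order $3$ one has $[L,\varphi] = L$ but $C_L(\varphi) = Z(L)\neq1$. The remedy is to feed the open subgroup $V\cap L\le C_L(\varphi)$ into Lemma~\ref{l-fng} to place it inside $Z(L)$, then use Schur's theorem to dispose of $[L,L]$, and finish in the abelian quotient $L/[L,L]$, where the coprimeness of the action does force the relevant centraliser to vanish.
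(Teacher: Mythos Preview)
Your proof is correct, and it takes a route genuinely different from the paper's in one key respect. Both arguments use Lemma~\ref{l-copr2} to make $K=\{[g,\varphi]\mid g\in G\}$ finite, and both show $\gamma_\infty(H)\le L$ via pronilpotency of $H/L$. For the inclusion $L\le\gamma_\infty(H)$, the paper appeals to the Engel-sink characterisation of $K$ (each $s\in K$ satisfies $s=[s,{}_k\varphi]$, hence lies in every $\gamma_m(H)$), whereas your argument via $L=[G,{}_k\varphi]\le\gamma_{k+1}(H)$ is equally clean. The substantive divergence is in proving that $L=[G,\varphi]$ is finite. The paper invokes Theorem~\ref{t2} to see that $G$ is locally nilpotent, so $L=\langle K\rangle$ is nilpotent; then the abelianisation $V=L/[L,L]$ satisfies $[V,\varphi]=V$, whence $V$ is the image of the finite set $K$ and is finite, and a nilpotent group with finite abelianisation is finite. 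You instead bypass Theorem~\ref{t2} entirely: from $|K|<\infty$ you get $C_G(\varphi)$ open, push its core into $Z(L)$ via Lemma~\ref{l-fng}, apply Schur to make $[L,L]$ finite, and finish in the abelian quotient using the coprime splitting $C_A(\varphi)\cap[A,\varphi]=1$. Your route is more self-contained within this section (it does not rely on the Lie-theoretic machinery behind Theorem~\ref{t2}), at the price of a slightly longer chain of reductions; the paper's route is shorter once Theorem~\ref{t2} is available.
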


 \begin{proof}
The group $G$ is locally nilpotent by Theorem~\ref{t2}. By Lemma~\ref{l-copr2}, the set $K=\{ [g,\varphi]\mid g\in G\}$ is finite. Therefore the commutator subgroup $[G,\varphi ]=\langle K\rangle$  is nilpotent.
By Lemma~\ref{l-gff},
\begin{equation}\label{e-ff}
[[G,\varphi],\varphi]=[G,\varphi].
\end{equation}

Let $V$ be the quotient of  $[G,\varphi ]$ by its derived subgroup. For any $u,v\in V$ we have $[uv,\varphi ]=[u,\varphi][v,\varphi]$, since $V$ is abelian, and $[V,\varphi ]=V$ by \eqref{e-ff}. Hence $V$ consists of the images of elements of $K$, and therefore is finite. Then the nilpotent group $[G,\varphi]$ is also finite (see, for example, \cite[5.2.6]{rob}).

The quotient $G\langle \varphi\rangle/[G,\varphi]$ is obviously the direct product of the images of $G$ and $\langle \varphi\rangle$ and therefore is pronilpotent. Hence,  $\gamma _{\infty}(G\langle \varphi\rangle)\leq [G,\varphi]$, so $\gamma _{\infty}(G\langle \varphi\rangle)$ is finite. Since the set of commutators $\{ [g,\varphi]\mid g\in G\}$ is the smallest Engel sink of $\varphi$ by Lemma~\ref{l-copr2}, it follows that  $\gamma _{\infty}(G\langle \varphi\rangle)= [G,\varphi]$.
 \end{proof}

\section{Prosoluble groups}

In this section we prove Theorem~\ref{t1} for prosoluble groups.
First we consider the case of prosoluble groups of finite Fitting height. Recall that by Theorem~\ref{t2} a pronilpotent group in which the word $[x,{}_ny]$ has strictly less than $2^{\aleph_0}$ values is locally nilpotent. Therefore, if $G$ is a profinite group in which the word $[x,{}_ny]$ has strictly less than $2^{\aleph_0}$ values, then the largest pronilpotent normal subgroup $F(G)$  is also the largest locally nilpotent normal subgroup, and we call it the Fitting subgroup of $G$. Then further terms of the Fitting series are defined as usual by induction: $F_1(G)=F(G)$ and $F_{i+1}(G)$ is the inverse image of $F(G/F_i(G))$. A group has finite Fitting height if $F_k(G)=G$ for some $k\in {\mathbb N}$.

 \begin{proposition}\label{pr-height}
 Let $G$ be a  prosoluble group in which the word $[x,{}_ny]$ has strictly less than $2^{\aleph_0}$ values. If $G$ has finite Fitting height, then $\gamma _{\infty} (G)$ is finite.
 \end{proposition}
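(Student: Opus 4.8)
The plan is to prove the proposition by induction on the Fitting height $h$ of $G$. If $h\le 1$, then $G$ is pronilpotent, so $\gamma_{\infty}(G)=1$ and there is nothing to prove; the real content is the metanilpotent case $h=2$, which I would establish first and then feed into the induction. Throughout, every section of $G$ again has fewer than $2^{\aleph_0}$ values of $[x,{}_ny]$, and by Theorem~\ref{t2} the subgroup $F(G)$ is the largest pronilpotent — equivalently, largest locally nilpotent — normal subgroup of $G$, so the Fitting series is well defined.

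\emph{Passing from the metanilpotent case to the general case.} Suppose $h\ge 3$, that the metanilpotent case is known, and that the proposition holds for prosoluble groups of Fitting height less than $h$. Put $F=F(G)$; then $G/F$ is prosoluble of Fitting height $h-1$, so by induction $\gamma_{\infty}(G/F)$ is finite, and $\gamma_{\infty}(G/F)\le F_{h-2}(G/F)$ has Fitting height at most $h-2$. Let $R$ be the preimage of $\gamma_{\infty}(G/F)$ in $G$; it is normal in $G$, $R/F$ is finite, $G/R$ is pronilpotent, and since $F\le F(R)$ while $R/F(R)$ is a quotient of $R/F$, the Fitting height of $R$ is at most $h-1$. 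As a closed subgroup of $G$, the group $R$ inherits the hypothesis, so by induction $\gamma_{\infty}(R)$ is finite; it is characteristic in $R$, hence normal in $G$. In $\bar G=G/\gamma_{\infty}(R)$ the image of $R$ is pronilpotent and normal with pronilpotent quotient $\bar G/\bar R\cong G/R$, so $\bar G$ is metanilpotent; by the metanilpotent case $\gamma_{\infty}(\bar G)$ is finite. Since $\gamma_{\infty}(\bar G)=\gamma_{\infty}(G)\gamma_{\infty}(R)/\gamma_{\infty}(R)\cong\gamma_{\infty}(G)/\big(\gamma_{\infty}(G)\cap\gamma_{\infty}(R)\big)$ and $\gamma_{\infty}(G)\cap\gamma_{\infty}(R)\le\gamma_{\infty}(R)$ is finite, $\gamma_{\infty}(G)$ is finite.

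\emph{The metanilpotent case and the main obstacle.} Let $G$ be metanilpotent, $F=F(G)=\prod_pF_p$ with $F_p$ the normal Sylow $p$-subgroup of $F$; since $G/F$ is pronilpotent, $\gamma_{\infty}(G)\le F$. The hypothesis enters as follows: if $p$ is a prime and $\varphi\in G$ a $p'$-element, then conjugation by $\varphi$ induces a coprime automorphism $\bar\varphi$ of the pronilpotent group $F_p$, and the semidirect product $F_p\rtimes\langle\bar\varphi\rangle$ is a quotient of the subgroup $F_p\langle\varphi\rangle\le G$, hence a section of $G$; applying Lemma~\ref{l-copr3} to it shows that $[F_p,\varphi]$ is finite. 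Because $G/\gamma_{\infty}(G)$ is pronilpotent, each $p'$-element centralises the image of $F_p$ there, so $[F_p,\varphi]\le\gamma_{\infty}(G)$, and therefore the normal subgroup $M:=\langle[F_p,\varphi]\mid p\text{ prime},\ \varphi\in G\text{ a }p'\text{-element}\rangle$ lies in $\gamma_{\infty}(G)$; its Sylow $p$-component equals the normal closure in $G$ of $[F_p,H]$ for a Hall $p'$-subgroup $H$ of $G$ (all $p'$-elements of $G$ lying in conjugates of $H$). One then wants to conclude that $\gamma_{\infty}(G)=M$ and that $M$ is finite — equivalently, that each $[F_p,H]$ is finite and that only finitely many primes $p$ contribute. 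This is where I expect the main difficulty to lie: one must control the infinitely many finite subgroups $[F_p,\varphi]$ simultaneously and cope with the fact that $F$ need not be nilpotent. I would handle it by a compactness argument in the spirit of Proposition~\ref{pr-dks} forcing all but finitely many contributions to be trivial, and by descending along the Frattini (or Zassenhaus) series of the $F_p$ to reduce to the case of abelian $F$; there the finiteness of the $[F_p,\varphi]$ together with the fact that every $p'$-element then centralises $F_p$ (and $C_G(F(G))\le F(G)$, which is valid for prosoluble groups of finite Fitting height) shows that the relevant quotient of $G$ is pronilpotent, so that $\gamma_{\infty}(G)$ coincides with the finite group $M$. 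The induction on Fitting height and the bookkeeping with the Fitting series above are routine by comparison.
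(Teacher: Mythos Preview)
Your induction on Fitting height and the reduction to the metanilpotent case are in line with the paper (the paper does this more briefly via $F_{k-1}(G)$ rather than via $\gamma_\infty(G/F)$, but the idea is the same). The divergence, and the real gap, is in the metanilpotent case itself, which you explicitly leave as a sketch.

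The paper does \emph{not} try to identify $\gamma_\infty(G)$ directly as the subgroup $M$ generated by the $[F_p,\varphi]$. Instead it argues element by element: for a fixed $a\in G$ it shows that $a$ has a \emph{finite} Engel sink in $F(G)\langle a\rangle$, and then invokes Theorem~\ref{t4.1} (the already-proved result on compact groups with countable Engel sinks) to conclude that $\gamma_\infty(G)$ is finite. Concretely, writing $a=a_pa_{p'}$ one uses Lemma~\ref{l-copr3} on $P\langle a_p\rangle\rtimes\langle a_{p'}\rangle$ to get each $\gamma_\infty(P\langle a\rangle)=[P,a_{p'}]$ finite, and the ``compactness'' you allude to is made precise as a subset-counting argument: if the minimal sinks $\mathscr E_p(a)$ were nontrivial for infinitely many primes~$p$, then products $\prod_{p\in\sigma}s_p$ over the $2^{\aleph_0}$ subsets $\sigma$ of that infinite prime set would yield $2^{\aleph_0}$ distinct elements of an Engel sink of $a$, contradicting the hypothesis. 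You never name Theorem~\ref{t4.1}, and without it your route requires you to control not just $[F_p,\varphi]$ for one $\varphi$ but $[F_p,H]$ for an \emph{infinite} Hall $p'$-subgroup $H$, which Lemma~\ref{l-copr3} does not give you; your proposed Frattini/abelian reduction does not obviously bridge this gap either. So the missing idea is precisely the passage through finite Engel sinks of individual elements and the appeal to Theorem~\ref{t4.1}.
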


\begin{proof}  It is sufficient to prove the result for the case of Fitting height 2. Then the general case will follow by induction on the Fitting height $k$ of $G$. Indeed, then $\gamma _{\infty}(G/ \gamma _{\infty}(F_{k-1}(G)))$ is finite, while $\gamma _{\infty}(F_{k-1}(G))$ is finite by the induction hypothesis, and as a result, $\gamma _{\infty}(G)$ is finite.

Thus, we assume that $G=F_2(G)$. By Theorem~\ref{t4.1}, it is sufficient to show that every element  $a\in G$ has a finite Engel sink.  Since $G/F(G)$ is locally nilpotent, an Engel sink of $a$ in $F(G)\langle a\rangle$ is also an Engel sink of $a$ in $G$.

For a prime $p$, let $P$ be a Sylow $p$-subgroup of $F(G)$, and write $a=a_pa_{p'}$, where $a_p$ is a $p$-element, $a_{p'}$ is a $p'$-element, and $[a_p,a_{p'}]=1$. Then $P\langle a_p\rangle$ is a normal Sylow $p$-subgroup of  $P\langle a\rangle$, on which $a_{p'}$ induces by conjugation a coprime automorphism. By Lemma~\ref{l-copr3} the subgroup $\gamma _{\infty}(P\langle a\rangle)=[P,a_{p'}]$ is finite. Since the pronilpotent group $P\langle a\rangle/\gamma _{\infty}(P\langle a\rangle)$ is locally nilpotent by Theorem~\ref{t2},  we can choose a finite smallest Engel sink $\mathscr E_p(a)\subseteq \gamma _{\infty}(P\langle a\rangle)$ of $a$ in $P\langle a\rangle$.

Note that
\begin{equation}\label{e-equiv}
   \text{if}\quad \mathscr E_p(a)=\{ 1\}, \quad\text{then}\quad \gamma _{\infty}(P\langle a\rangle)=1.
\end{equation}
Indeed, if  $\mathscr E_p(a)=\{ 1\}$, then, in particular, the image $\bar a$ of $a$ in $\langle a\rangle/C_{\langle a\rangle}([P,a_{p'}])$ is an Engel element of the finite group $[P,a_{p'}]\langle \bar a\rangle$ and therefore $\bar a$ is contained in its Fitting subgroup by  Baer's theorem \cite[Satz~III.6.15]{hup}. Then
$$
\gamma _{\infty}(P\langle a\rangle)=[P,a_{p'}]=[[P,a_{p'}],a_{p'}]=[[P,a_{p'}],\bar a_{p'}]=1.
$$

By Lemma~\ref{l-min}, for every $s\in \mathscr E_p(a)$ we have $s=[s,{}_ka]$ for some $k\in {\mathbb N}$, and then also
\begin{equation}\label{e-cycl}
s=[s,{}_{kl}a]\quad \text{for any}\;\, l\in {\mathbb N}.
\end{equation}

We claim that $\mathscr E_p(a)=\{1\}$ for all but finitely many primes $p$. Suppose the opposite, and $\mathscr E_{p_i}(a)\ne \{1\}$ for each prime $p_i$ in an infinite set of primes~$\pi$. Choose a nontrivial element $s_{p_i}\in \mathscr E_{p_i}(a)$ for every $p_i\in \pi$. For any subset $\sigma\subseteq \pi$, consider the (infinite) product
$$
s_{\sigma}=\prod _{p_j\in \sigma} s_{p_j}.
$$
Note that the elements $s_{p_j}$ commute with one another  belonging to different normal Sylow subgroups of $F(G)$.  If  $\mathscr E(a)$ is any Engel sink of $a$ in $G$, then for some $k\in {\mathbb N}$ the commutator $[s_{\sigma},{}_ka]$ belongs to $\mathscr E(a)$. Because of the properties \eqref{e-cycl}, all the components of $[s_{\sigma},{}_ka]$ in the Sylow $p_j$-subgroups of $F(G)$ for $p_j\in \sigma$ are non-trivial, while all the other components in Sylow $q$-subgroups for $q\not\in \sigma$ are trivial by construction. Therefore for different subsets $\sigma\subseteq \pi$ we thus obtain different elements of  $\mathscr E(a)$. The infinite set $\pi$ has $2^{\aleph_0}$ different subsets, whence $\mathscr E(a)$ has  cardinality at least $2^{\aleph_0}$. But we can choose
$$\mathscr E(a)=\bigcup_{i\in \N}\{[g,{}_{n+i}a]\mid g\in G\},$$
which is a countable union of sets each having cardinality less than  $2^{\aleph_0}$ by hypothesis. This Engel sink $\mathscr E(a)$ therefore also has cardinality less than  $2^{\aleph_0}$, a contradiction.

Thus, for all but finitely many primes $p$ we have $\mathscr E_p(a)=\{1\}$, which is the same as $\gamma _{\infty}(P\langle a\rangle)=1$ by \eqref{e-equiv}. Therefore the subgroup
$$
\gamma _{\infty}(F(G)\langle a\rangle)=\prod_p\gamma _{\infty}(P\langle a\rangle)
$$
 is finite. The quotient $F(G)\langle a\rangle/\gamma _{\infty}(F(G)\langle a\rangle)$ is pronilpotent and therefore locally nilpotent by Theorem~\ref{t2}. Hence we can choose a finite Engel sink for $a$ in $G$ as a subset of $\gamma _{\infty}(F(G)\langle a\rangle)$.

Thus, every element of $G$ has a finite Engel sink,  and therefore $\gamma _{\infty}(G)$ is finite by Theorem~\ref{t4.1}.
\end{proof}

\begin{lemma}\label{l-finp}
Let $\varphi $ be a coprime automorphism
of a prosoluble group $G$ such that the set of primes $\pi (G)$ is finite. If the word $[x,{}_ny]$ has strictly less than $2^{\aleph_0}$ values in the semidirect product $G\langle \varphi\rangle$, then the subgroup $[G,\varphi ]$ is finite.
\end{lemma}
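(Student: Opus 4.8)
The plan is to reduce everything to the pronilpotent case, which is settled by Lemma~\ref{l-copr3}, by an induction on $m=|\pi(G)|$. First I would replace $G$ by $[G,\varphi]$: by Lemma~\ref{l-gff} this subgroup satisfies $[[G,\varphi],\varphi]=[G,\varphi]$, it is $\varphi$-invariant and prosoluble, $\pi([G,\varphi])\subseteq\pi(G)$ is finite, the semidirect product $[G,\varphi]\langle\varphi\rangle$ is a closed subgroup of $G\langle\varphi\rangle$ so that $[x,{}_ny]$ still has fewer than $2^{\aleph_0}$ values on it, and $[G,\varphi]$ is finite if and only if $[[G,\varphi],\varphi]$ is. Passing also to the procyclic group that $\langle\varphi\rangle$ induces on $[G,\varphi]$, I may assume $G=[G,\varphi]$, and it remains to prove that $G$ is finite. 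In this situation $\gamma_\infty(G\langle\varphi\rangle)=G$, because $G\langle\varphi\rangle/G$ is procyclic, hence pronilpotent, so $\gamma_\infty(G\langle\varphi\rangle)\le G$, while an easy induction from $[G,\varphi]=[[G,\varphi],\varphi]$ gives $[G,\varphi]\le\gamma_i(G\langle\varphi\rangle)$ for every $i$. In particular it would also suffice to bound the Fitting height of $G$ and invoke Proposition~\ref{pr-height}.

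I would then induct on $m=|\pi(G)|$. If $m\le1$ then $G$ is pronilpotent and Lemma~\ref{l-copr3} already gives that $[G,\varphi]=G$ is finite. Suppose $m\ge2$ and fix a prime $p\in\pi(G)$. The largest normal pro-$p'$ subgroup $D=O_{p'}(G)$ is $\varphi$-invariant with $|\pi(D)|<m$, so by the induction hypothesis $[D,\varphi]$ is finite; likewise the normal $p$-subgroup $O_p(G)$ is pronilpotent, so $[O_p(G),\varphi]$ is finite by Lemma~\ref{l-copr3}. The aim is to factor out a finite $\varphi$-invariant normal subgroup of $G$ so that in the quotient $\varphi$ centralizes the Fitting subgroup $F(G)=O_p(G)O_{p'}(G)$; then $F(G)\le Z(G)$ by Lemma~\ref{l-fng}, and since $C_G(F(G))\le F(G)$ in a prosoluble group this forces $G=F(G)$, so $G$ is pronilpotent and Lemma~\ref{l-copr3} completes the proof. (Should this soft reduction not close, one bounds the Fitting height of $G$ directly by a Hall--Higman--type argument in the finite quotients of $G$, exploiting that $\pi(G)$ is finite and that, by Lemma~\ref{l2}, $\varphi$ behaves almost like an $n$-Engel element, and then concludes by Proposition~\ref{pr-height}.)

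The main obstacle is the passage to quotients in the inductive step. The finite subgroups $[D,\varphi]$ and $[O_p(G),\varphi]$ are normal in $D\langle\varphi\rangle$ and in $O_p(G)\langle\varphi\rangle$, but need not be normal in $G$ --- this already fails for finite groups --- so one must replace them by their normal closures in $G$ and check that these stay finite. Such a normal closure is generated by a $G$-invariant union of conjugacy classes of elements of finite order, hence is finite by Dietzmann's lemma as soon as each of those conjugacy classes is finite; and by Lemma~\ref{l-c} it is enough to show that the elements involved have fewer than $2^{\aleph_0}$ conjugates in $G$. This is where the hypothesis on the number of values of $[x,{}_ny]$ is used decisively: the relevant elements lie in Engel sinks and satisfy $s=[s,{}_k\varphi]$ for suitable $k$, and one controls their conjugacy classes first on a coset of an open normal subgroup of $G$, via Lemmas~\ref{l1} and~\ref{l2}, and then globally. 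I expect this conjugacy-class bound to be the heart of the argument.
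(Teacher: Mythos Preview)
Your reduction to $G=[G,\varphi]$ and the observation that it suffices to bound the Fitting height of $G$ and then invoke Proposition~\ref{pr-height} are both correct, and this is precisely the target the paper aims for. However, your inductive approach via $O_p(G)$ and $O_{p'}(G)$ has the gap you yourself identify and do not close: you never prove that the normal closures of $[O_{p'}(G),\varphi]$ and $[O_p(G),\varphi]$ in $G$ are finite. Your sketch via Dietzmann's lemma and Lemma~\ref{l-c} requires every element of these finite subgroups to have a $G$-conjugacy class of size less than $2^{\aleph_0}$; but an arbitrary element of the subgroup $[O_{p'}(G),\varphi]$ need not itself satisfy $s=[s,{}_k\varphi]$ (only elements of a \emph{smallest Engel sink} do, and $O_{p'}(G)$ is not pronilpotent, so Lemma~\ref{l-copr2} does not describe that sink), so the link back to the hypothesis on values of $[x,{}_ny]$ is missing. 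There is a second gap: the containment $C_G(F(G))\le F(G)$ is a theorem about finite soluble groups and does not hold for prosoluble groups in general; even granting it, the step from ``$\varphi$ centralizes the image of $F(G)$ in the quotient'' to ``the quotient is pronilpotent'' is unclear, since the Fitting subgroup of the quotient may be strictly larger than the image of $F(G)$.

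The paper sidesteps all of this with a short direct argument that uses no induction on $|\pi(G)|$. Instead of the normal subgroups $O_q(G)$, it takes $\varphi$-invariant \emph{Sylow} $q$-subgroups $G_q$ of $G$ (Lemma~\ref{l-inv}); each $G_q$ is a pro-$q$ group, hence pronilpotent, so Lemma~\ref{l-copr2} applies and each set $\{[g,\varphi]\mid g\in G_q\}$ is finite. Since $\pi(G)$ is finite, one can choose an open normal $\varphi$-invariant subgroup $N$ of $G$ meeting all of these finite sets trivially; then $\varphi$ centralizes each Sylow subgroup $N\cap G_q$ of $N$, and since these generate $N$ one gets $[N,\varphi]=1$. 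Now Lemma~\ref{l-fng} (with $G=[G,\varphi]$) gives $N\le Z(G)$, so $G/Z(G)$ is finite, the Fitting height of $G$ is finite, and Proposition~\ref{pr-height} finishes. The point you missed is that working with $\varphi$-invariant Sylow subgroups rather than with the $O_q(G)$ removes the normality obstruction entirely: one never needs $G_q$ to be normal, only $\varphi$-invariant, and the role of normality is played instead by the single open subgroup $N$.
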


\begin{proof}
By Lemma~\ref{l-gff} we can assume that $G=[G,\varphi ]$. For every prime $q\in \pi (G)$ there is a $\varphi$-invariant Sylow $q$-subgroup $G_q$ of $G$ by Lemma~\ref{l-inv}. By Lemma~\ref{l-copr2} the set $\{[g,\varphi]\mid g\in G_q\}$ is finite. Since $\pi (G)$ is finite, there is an open normal subgroup $N$ of $G$ that intersects trivially with every set $\{[g,\varphi]\mid g\in G_q\}$, which implies that $\varphi$ centralizes every Sylow $q$-subgroup $N\cap G_q$ and therefore $[N,\varphi]=1$. Since $N$ is normal and $G=[G,\varphi ]$, we obtain $[N,G]=1$ by Lemma~\ref{l-fng}. Thus, $G/Z(G)$ is finite and, in particular, the Fitting height of $G$ is finite. Then $\gamma _{\infty} (G\langle \varphi\rangle)$ is finite by Proposition~\ref{pr-height}, and therefore $[G,\varphi]$ is also finite, since $[G,\varphi ]\leq  \gamma _{\infty} (G\langle \varphi\rangle)$ by Lemma~\ref{l-copr3}  applied to $G\langle \varphi\rangle/ \gamma _{\infty} (G\langle \varphi\rangle)$.
\end{proof}

\begin{proposition}\label{pr-f}
If the word $[x,{}_ny]$ has strictly less than $2^{\aleph_0}$ values in a prosoluble group~$G$, then $F(G)\ne 1$.
\end{proposition}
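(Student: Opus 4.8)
The plan is to argue by contradiction: suppose $F(G)=1$; we may assume $G\ne 1$. Two quick reductions. If $G$ were a pro-$p$ group it would be pronilpotent, so $F(G)=G\ne 1$; hence $|\pi(G)|\ge 2$. Also $G$ cannot have finite Fitting height, since $F_1(G)=F(G)=1$ forces, inductively, $F_{i+1}(G)=F(G/F_i(G))=F(G)=1$, so all $F_i(G)=1$ and then $G=F_k(G)=1$; pushing the lower Fitting series into finite continuous quotients (using that $\gamma_\infty$ and its iterates commute with quotients) one even obtains finite quotients of $G$ of arbitrarily large Fitting height. Recall finally that, by the observation already used in the proof of Proposition~\ref{pr-height}, every $g\in G$ has the Engel sink $\mathscr E(g)=\bigcup_{i\in\N}\{[h,{}_{n+i}g]\mid h\in G\}$, which is a countable union of sets of cardinality $<2^{\aleph_0}$ and hence itself has cardinality $<2^{\aleph_0}$; so it will suffice to exhibit a set of $2^{\aleph_0}$ pairwise distinct $n$-Engel values in $G$.

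The mechanism for producing these values is the "continuum of independent components" construction, exactly in the spirit of Proposition~\ref{pr-height}. The aim is to find a single element $g\in G$ together with a $g$-invariant closed normal subgroup $N$ of $G$ that decomposes as an internal product $N=\prod_{i\in\N}N_i$ of pairwise commuting $g$-invariant pronilpotent pieces $N_i$, on each of which $g$ acts as a coprime automorphism with $[N_i,\bar g]\ne 1$. Granting this, Lemma~\ref{l-copr2} gives in each $N_i\langle \bar g\rangle$ a finite smallest Engel sink $\{[x,\bar g]\mid x\in N_i\}\ne\{1\}$, and Lemma~\ref{l-min} supplies $1\ne s_i\in N_i$ with $s_i=[s_i,{}_{k_i}\bar g]$, hence $s_i=[s_i,{}_{k_i l}\bar g]$ for every $l$. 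For $\sigma\subseteq\N$ put $s_\sigma=\prod_{i\in\sigma}s_i\in N$; then, arguing just as in Proposition~\ref{pr-height} (the $N_i$-component of $[s_\sigma,{}_m g]$ stays nontrivial for $i\in\sigma$ by $k_i$-periodicity, and is trivial for $i\notin\sigma$), for all sufficiently large $m$ the values $[s_\sigma,{}_{m}g]$ are pairwise distinct over the $2^{\aleph_0}$ subsets $\sigma$ and all lie in $\mathscr E(g)$, contradicting $|\mathscr E(g)|<2^{\aleph_0}$.

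The real content, and the main obstacle, is producing the element $g$ and the independent coprime layers $N_i$ from the hypothesis $F(G)=1$, which does not hand them over directly. The natural tools are the prosoluble Sylow and Hall theory with $\varphi$-invariant subgroups (Lemma~\ref{l-inv}), the coset-identity output of Lemmas~\ref{l1} and~\ref{l2}, the coprime-action lemmas (Lemma~\ref{l-fng}, Lemma~\ref{l-copr3}), and Lemma~\ref{l-finp}, which keeps the relevant commutator pieces finite when only finitely many primes are involved. A plausible route is to use that $G$ has finite quotients of unbounded Fitting height to locate, inside a section of the Fitting subgroup of such a quotient, a nontrivial coprime action of a fixed prime-power element, then to lift these local data to $G$ along a descending chain of open normal subgroups and patch them so that one and the same $g$ acts nontrivially on infinitely many pairwise commuting $g$-invariant pronilpotent sections coprime to $\langle g\rangle$. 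Guaranteeing simultaneously a \emph{common} $g$, the genuine independence of the $N_i$, and the coprimality — and doing so when $\pi(G)$ is finite, so that distinct layers must reuse the same primes and have to be separated by other means — is where the difficulty concentrates; once the layers are in place, the argument of paragraph two runs as in Proposition~\ref{pr-height}.
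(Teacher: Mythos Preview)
Your proposal is not a proof but a plan with its central step left open. You correctly set up the contradiction and correctly identify the endgame --- the ``continuum of subsets'' mechanism from Proposition~\ref{pr-height} --- but you then explicitly concede that ``the real content, and the main obstacle, is producing the element $g$ and the independent coprime layers $N_i$'', and you offer only a ``plausible route'' in place of a construction. That route is not carried out: you do not explain how to lift the local data from finite quotients to a \emph{single} $g\in G$, how to guarantee that the lifted pieces $N_i$ are closed, $g$-invariant, and genuinely commute pairwise inside $G$, or why the coprime action survives the lift. These are precisely the points at which such patching arguments typically break, and your text does not address any of them. As written, the proposal reduces the proposition to a harder-looking technical statement that is left unproved.

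The paper does not write out this proof either, but defers to \cite[Proposition~5.4]{khu-shu191} with three explicit substitutions (Proposition~\ref{pr-height} for their Proposition~5.1, Lemma~\ref{l-copr3} for their Lemma~4.8, Theorem~\ref{t2} for their Theorem~3.1). That argument does not attempt your direct ``infinitely many independent layers for one $g$'' construction; it proceeds instead through the prosoluble Hall/Sylow machinery and a Hall--Higman--type reduction, using the coprime-action finiteness results (here Lemmas~\ref{l-copr3} and~\ref{l-finp}) together with Proposition~\ref{pr-height} to force a nontrivial pronilpotent normal subgroup. If you want to salvage your approach, you must actually perform the lifting and patching --- which is delicate, particularly when $\pi(G)$ is finite and distinct layers cannot be separated by primes --- or else switch to the argument of \cite{khu-shu191} and verify that the three indicated substitutions go through verbatim under the present hypothesis.
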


\begin{proof}
The proof of this proposition is obtained by  repeating word-for-word the proof of Proposition~5.4 in \cite{khu-shu191}, where Proposition~5.1 in \cite{khu-shu191} is replaced with Proposition~\ref{pr-height} in the present paper,  Lemma~4.8  in \cite{khu-shu191} is replaced with Lemma~\ref{l-copr3} in the present paper, and Theorem~3.1 in \cite{khu-shu191} is replaced with Theorem~\ref{t2} in the present paper. The corresponding lemmas, proposition, and theorem provide exactly the same results as in \cite{khu-shu191} under the hypotheses of Proposition~\ref{pr-f}.
\end{proof}

We are now ready to prove the main result of this section.

\begin{theorem}\label{t3}
Suppose that $G$ is a prosoluble group in which  the word $[x,{}_ny]$ has strictly less than $2^{\aleph_0}$ values. Then $G$ has a finite normal subgroup $N$ such that $G/N$ is locally nilpotent.
\end{theorem}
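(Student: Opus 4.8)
The conclusion is equivalent to the finiteness of $\gamma _\infty(G)$. Indeed, if $\gamma _\infty(G)$ is finite then $N:=\gamma _\infty(G)$ is a finite normal subgroup, the quotient $G/N$ is pronilpotent, and $G/N$ still has fewer than $2^{\aleph_0}$ values of $[x,{}_ny]$, so $G/N$ is locally nilpotent by Theorem~\ref{t2}; conversely, if $G/N$ is locally nilpotent for some finite normal $N$ then $G/N$ is pronilpotent (every finite continuous quotient of it is nilpotent) and hence $\gamma _\infty(G)\le N$ is finite. So the plan is to prove that $\gamma _\infty(G)$ is finite, and by Proposition~\ref{pr-height} this follows as soon as $G$ has \emph{finite Fitting height}; that is the one fact that still needs to be extracted from the hypothesis. (Equivalently, by Theorem~\ref{t4.1} it would be enough to show that every element of $G$ has a \emph{finite} Engel sink; the hypothesis only grants, a priori, an Engel sink $\mathscr E(g)=\bigcup _{i\ge 0}\{[h,{}_{n+i}g]\mid h\in G\}$ of cardinality $<2^{\aleph_0}$, because $2^{\aleph_0}$ has uncountable cofinality, so the substance of the argument is precisely to upgrade ``$<2^{\aleph_0}$'' to ``finite''.)

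The mechanism for this upgrade is the cardinality trick already exploited in the proof of Proposition~\ref{pr-height}: an \emph{infinite} family of mutually commuting nontrivial Engel values that are ``independent'' (say, supported in distinct normal Sylow subgroups) yields, by taking products over the $2^{\aleph_0}$ subsets of an infinite index set, a set of $2^{\aleph_0}$ distinct values of $[x,{}_ny]$, which the hypothesis forbids. Accordingly, I would argue by contradiction. If $G$ had infinite Fitting height, the Fitting series $1<F_1(G)<F_2(G)<\cdots $ would be strictly increasing, since it cannot stabilise below $G$: by Proposition~\ref{pr-f} (applied to the section $G/F_k(G)$) we have $F(G/F_k(G))=F_{k+1}(G)/F_k(G)\ne 1$ whenever $F_k(G)<G$. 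Fixing a suitable element $g$, I would then extract from infinitely many consecutive Fitting factors $F_{k+1}(G)/F_k(G)$ nontrivial contributions to the Engel sink of $g$ lying in distinct normal Sylow subgroups of successive Fitting layers, so that these contributions commute, and feed them into the subset construction above to manufacture a continuum of Engel values — the desired contradiction.

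The finitely many ``bottom'' primes and layers, where such orthogonality breaks down, would be dealt with separately: passing via Lemma~\ref{l-inv} to $g$-related invariant Sylow or Hall subgroups, and applying Lemma~\ref{l-finp} together with Lemma~\ref{l-copr3} and the coprime-action lemmas of the previous section, one sees that the corresponding pieces of $\gamma _\infty$ are finite, so they can be factored out and do not disturb the count. Once $G$ is shown to have finite Fitting height, Proposition~\ref{pr-height} gives that $\gamma _\infty(G)$ is finite, and then, as explained in the first paragraph, $N=\gamma _\infty(G)$ is the required finite normal subgroup with $G/N$ locally nilpotent.

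The step I expect to be the main obstacle is the passage from ``unbounded Fitting height'' to ``a continuum of Engel values''. Because $g$ does not act coprimely on $G$, the nontrivial Engel contributions produced in different Fitting layers need not live in orthogonal Sylow subgroups and need not commute, so the clean ``$2^{\aleph_0}$ subsets'' argument of Proposition~\ref{pr-height} does not apply verbatim; making the independence of these contributions rigorous — presumably through repeated passage to sections and to invariant Sylow/Hall subgroups, stripping off the ``small'' bottom part with the finiteness results of the preceding section and Lemma~\ref{l-finp} — is where the bulk of the effort will go.
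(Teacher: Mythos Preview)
Your reduction to showing that $G$ has finite Fitting height is correct, and you correctly invoke Proposition~\ref{pr-f} to see that the Fitting series cannot stabilise below~$G$. However, the plan you sketch for deriving a contradiction from infinite Fitting height has a genuine gap, which you yourself flag: the Engel contributions you propose to extract live in \emph{successive quotients} $F_{k+1}(G)/F_k(G)$, not in orthogonal normal Sylow subgroups of a single pronilpotent group, so there is no evident way to lift them to mutually commuting elements of $G$ and run the $2^{\aleph_0}$-subsets trick. In Proposition~\ref{pr-height} the Sylow pieces sit side by side inside one pronilpotent group; here the layers are stacked vertically, and neither the coprime-action lemmas nor Lemma~\ref{l-finp} give you independence across different Fitting layers for a single element~$g$.

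The paper avoids this obstacle entirely by a short bootstrapping argument. Apply Proposition~\ref{pr-height} to $F_2(G)$ (which has Fitting height~$2$) to get $\gamma_\infty(F_2(G))$ finite; then $C=C_{F_2(G)}(\gamma_\infty(F_2(G)))$ has finite index in $F_2(G)$ and is locally nilpotent (any finitely generated subgroup of $C$ is nilpotent modulo a central subgroup), so $C\le F(G)$ and hence $F_2(G)/F(G)$ is finite. Passing to $\bar G=G/F(G)$, the Fitting subgroup $F(\bar G)=\overline{F_2(G)}$ is finite, so choose an open normal $N\le\bar G$ with $N\cap F(\bar G)=1$. If $N\ne 1$ then Proposition~\ref{pr-f} gives $F(N)\ne 1$, yet $F(N)\le N\cap F(\bar G)=1$; hence $N=1$ and $\bar G$ is finite. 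Thus $G$ has finite Fitting height and Proposition~\ref{pr-height} finishes. The missing idea is not a counting trick across infinitely many layers but this two-step use of the existing tools: Proposition~\ref{pr-height} at height~$2$ makes $F(\bar G)$ finite, and then Proposition~\ref{pr-f} is used not merely as a nontriviality statement but, combined with the open-subgroup trick, as a finiteness statement for $\bar G$.
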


\begin{proof}
By Theorem~\ref{t2} it is sufficient to prove that $\gamma _{\infty}(G)$ is finite. 
By Proposition~\ref{pr-height} we obtain that $\gamma _{\infty}(F_2(G))$
is finite and the quotient $F_2(G)/\gamma _{\infty}(F_2(G))$ is locally nilpotent by Theorem~\ref{t2}. Then the subgroup $C=C_{F_2(G)}(\gamma _{\infty}(F_2(G)))$ has finite index in $F_2(G)$ and is locally nilpotent. Indeed, for any finite subset $S\subseteq C_{F_2(G)}(\gamma _{\infty}(F_2(G)))$ we have $\gamma _k(\langle S\rangle)\leq \gamma _{\infty}(F_2(G))$ for some $k$, and then
$$
\gamma _{k+1}(\langle S\rangle)=[\gamma _k(\langle S\rangle), \langle S\rangle]\leq [\gamma _{\infty}(F_2(G)), C]=1.
$$
As a normal locally nilpotent subgroup, $C$ is contained in $F(G)$. Hence, $F_2(G)/F(G)$ is finite.

We claim that the quotient $G/F(G)$ is finite. Let the bar denote the images in $\bar G=G/F(G)$. Then $F(\bar G)=\overline{F_2(G)}$ is finite by the above. There is an open normal subgroup $N$ of $\bar G$ such that $N\cap  F(\bar G)=1$. If $N\ne 1$, then $F(N)\ne 1$ by Proposition~\ref{pr-f}. But $F(N)\leq N\cap F(\bar G)=1$; hence we must have $N=1$, so $\bar G$ is finite.

Thus, $G/F(G)$ is finite, and therefore $G$ has finite Fitting height. By Proposition~\ref{pr-height} we obtain that $\gamma _{\infty}(G)$ is finite, as required.
\end{proof}

A proof of the next result can be obtained as in \cite[Corollary~5.6]{khu-shu191} with only obvious modifications, so we omit details.

\begin{corollary}\label{c-virt}
Suppose that $G$ is a virtually prosoluble group in which the word $[x,{}_ny]$ has strictly less than $2^{\aleph_0}$ values. Then $G$ has a finite normal subgroup $N$ such that $G/N$ is locally nilpotent.
\end{corollary}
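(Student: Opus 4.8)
The plan is to reduce to the case where the Fitting subgroup $F(G)$ is open in $G$, and then to apply Theorem~\ref{t4.1} by verifying that every element of $G$ has a finite Engel sink. For the reduction: since $G$ is virtually prosoluble, the core in $G$ of an open prosoluble subgroup is an open normal prosoluble subgroup $H$. As the word $[x,{}_ny]$ has fewer than $2^{\aleph_0}$ values in $H$, Theorem~\ref{t3} gives a finite normal subgroup $M$ of $H$ with $H/M$ locally nilpotent; then $\gamma_{\infty}(H)\leqslant M$ is finite, and being characteristic in $H\trianglelefteq G$ it is normal in $G$. Replacing $G$ by $G/\gamma_{\infty}(H)$ --- which inherits the hypothesis, and for which the conclusion returns the conclusion for $G$ since $\gamma_{\infty}(H)$ is finite --- I may assume that $H$ is pronilpotent, hence locally nilpotent by Theorem~\ref{t2}. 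Then $H\leqslant F:=F(G)$, so $F$ is open in $G$ and $\bar G:=G/F$ is a finite group.

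Next, fix $a\in G$; the goal is a finite Engel sink of $a$ in $G$, after which Theorem~\ref{t4.1} finishes the proof. Following the proof of Proposition~\ref{pr-height}, write $a=\prod_pa_p$ for the pro-$p$-components of $a$ (so $[a_p,a_q]=1$) and $F=\prod_pF_p$ for the Sylow decomposition of $F$, each $F_p$ being normal in $G$. For each prime $p$ the subgroup $F_p\langle a_p\rangle$ is a pro-$p$ group normalised by $a_{p'}:=\prod_{q\neq p}a_q$, which acts on it as a coprime automorphism, and $F_p\langle a\rangle=F_p\langle a_p\rangle\rtimes\langle a_{p'}\rangle$; so Lemma~\ref{l-copr3} gives that $\gamma_{\infty}(F_p\langle a\rangle)=[F_p,a_{p'}]$ is finite. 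Repeating verbatim the continuum-counting argument of Proposition~\ref{pr-height} --- using Lemma~\ref{l-min} (so that a nontrivial element $s$ of a smallest Engel sink satisfies $s=[s,{}_ka]$, whence $[s,{}_ja]\neq 1$ for every $j$), the fact that the chosen elements lie in distinct direct factors $F_p$, and that $a$ always admits the Engel sink $\bigcup_{i\in\N}\{[g,{}_{n+i}a]\mid g\in G\}$, a countable union of sets of cardinality $<2^{\aleph_0}$ and hence itself of cardinality $<2^{\aleph_0}$ --- yields $\gamma_{\infty}(F_p\langle a\rangle)=1$ for all but finitely many $p$. Therefore $\gamma_{\infty}(F\langle a\rangle)=\prod_p[F_p,a_{p'}]$ is finite, $F\langle a\rangle/\gamma_{\infty}(F\langle a\rangle)$ is pronilpotent and so locally nilpotent by Theorem~\ref{t2}, and $a$ possesses a finite Engel sink inside $F\langle a\rangle$. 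To upgrade this to a finite Engel sink of $a$ in all of $G$ one argues as in \cite[Corollary~5.6]{khu-shu191}: since $\bar G$ is finite, for every $x\in G$ all sufficiently long commutators $[x,{}_ma]$ lie in the preimage of the stable term $[\bar G,{}_{\infty}\bar a]$, which is an $\langle a\rangle$-invariant open subgroup of $G$ containing $F$, and through it they are eventually absorbed into the finite set already produced.

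The step I expect to be the main obstacle is precisely this last upgrade: the locally nilpotent machinery of Section~\ref{s-pron} and the coprime-action lemmas only see what happens inside the open pronilpotent subgroup $F(G)$, whereas an Engel sink of $a$ must control $[x,{}_ma]$ for every $x\in G$, so one genuinely needs the finiteness of $G/F(G)$ to funnel long Engel commutators into a subgroup on which those tools bite --- which is exactly the manoeuvre of \cite[Corollary~5.6]{khu-shu191}. Apart from this, everything is a routine transcription of the prosoluble case together with the reduction above, so the modifications really are obvious.
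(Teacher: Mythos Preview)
Your proposal is correct and matches the paper's approach: the paper gives no details at all, simply stating that the proof ``can be obtained as in \cite[Corollary~5.6]{khu-shu191} with only obvious modifications,'' and your write-up is precisely a spelling-out of those modifications---reducing via Theorem~\ref{t3} to the case where $F(G)$ is open, then replaying the Sylow-by-Sylow Engel-sink count from Proposition~\ref{pr-height} inside $F\langle a\rangle$, and finally invoking the same external reference for the passage from $F\langle a\rangle$ to $G$. You have also correctly located the one genuinely non-routine point, namely that when $G/F(G)$ is merely finite (not locally nilpotent) the commutators $[x,{}_ma]$ need not fall into $F$, so the upgrade to a finite Engel sink in all of~$G$ really does require the extra manoeuvre from \cite[Corollary~5.6]{khu-shu191}.
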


\section{Profinite groups}

The proof of Theorem~\ref{t1} uses induction on  so-called nonprosoluble length.
We recall the relevant definitions.  The  \textit{nonsoluble length} $\lambda (H)$  of a finite group $H$ is defined as the minimum number of nonsoluble factors in a normal series in which every  factor  either is soluble or is a direct product of non-abelian simple groups. (In particular, the group is soluble if and only if its nonsoluble length is $0$.) Clearly, every finite group has a normal series with these properties, and therefore its nonsoluble length is well defined.  It is easy to see that the nonsoluble length $\lambda (H)$ is equal to the least positive integer $l$ such that there is a series of characteristic subgroups
\begin{equation*}
1=L_0\leqslant R_0 <  L_1\leqslant R_1<  \dots \leqslant R_{l}=H
\end{equation*}
in which each quotient $L_i/R_{i-1}$ is a (nontrivial) direct product of non-abelian simple groups, and each quotient $R_i/L_{i}$ is soluble (possibly trivial).

We shall use the following result of Wilson \cite{wil83}, which we state in the special case of $p=2$ using the terminology of nonsoluble length.

\begin{theorem}[{see \cite[Theorem~2*]{wil83}}]\label{t-wil83}
Let $K$ be a normal subgroup of a finite group~$G$. If a Sylow $2$-subgroup $Q$ of $K$ has a coset $tQ$ of exponent dividing $2^k$, then the nonsoluble length of $K$ is at most $k$.
\end{theorem}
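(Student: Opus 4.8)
The plan is to prove this by induction on $|K|$, the substantive point being a reduction to the action of $K$ on the set of simple factors of its socle. First I would pass to the subgroup $K\langle t\rangle$ of $G$, so that $K\trianglelefteq G$ with $t\in G$, and then factor out the soluble radical $R$ of $K$: since $R$ is characteristic in $K$, the image of $tQ$ in $G/R$ is a coset of the Sylow $2$-subgroup $QR/R$ of $K/R$ whose exponent still divides $2^k$, while $\lambda(K)=\lambda(K/R)$ because $R$ is soluble. Hence if $R\ne1$ I finish by induction, and so I may assume $R=1$; I may also assume $k\ge1$, since $\exp(tQ)=1$ forces $Q=1$ and then $K$ is soluble, giving $K=R=1$. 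If $K=1$ there is nothing to prove; otherwise $M:=\mathrm{soc}(K)$ is a nontrivial direct product of non-abelian simple groups $S_1,\dots,S_m$ (there is no abelian part because $R=1$) and $C_K(M)=1$. Let $N\trianglelefteq K$ be the kernel of the permutation action of $K$ on $\{S_1,\dots,S_m\}$; then $N$ is characteristic in $K$, $M\le N$, the quotient $N/M$ embeds in $\prod_i\mathrm{Out}(S_i)$ and is therefore soluble by the Schreier conjecture, and $\overline K:=K/N$ embeds in $\mathrm{Sym}(m)$. Splicing the normal series $1\le M\le N$ onto the pullback of a shortest normal series of $\overline K$ of the kind appearing in the definition of nonsoluble length gives $\lambda(K)\le1+\lambda(\overline K)$. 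Writing $\rho\colon K\langle t\rangle\to\mathrm{Sym}(m)$ for the permutation action, $\overline Q:=\rho(Q)$ is a Sylow $2$-subgroup of $\overline K=\rho(K)\trianglelefteq\rho(K\langle t\rangle)$ and $\rho(tQ)=\overline t\,\overline Q$ with $\overline t:=\rho(t)$. Since $|\overline K|<|K|$ (because $M\ne1$), the induction will close once I show that the exponent of $\overline t\,\overline Q$ divides $2^{k-1}$: that yields $\lambda(\overline K)\le k-1$ and hence $\lambda(K)\le k$.

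The hard part is exactly this exponent drop, and here the plan is a direct computation using the wreath-product-like structure of $M$ together with the permutation action. Fix $q\in Q$, put $v=tq\in tQ$, so $v^{2^k}=1$, and suppose for contradiction that $\rho(v)$ has order exactly $2^k$ in $\mathrm{Sym}(m)$ (its order is a power of $2$ dividing $2^k$, so otherwise it already divides $2^{k-1}$). Then $\rho(v)$ has a cycle of length $2^k$; let $S_0,\dots,S_{2^k-1}$ be the simple factors of $M$ in this cycle, so conjugation by $v$ cyclically permutes them and induces isomorphisms $\alpha_i\colon S_i\to S_{i+1}$ (indices modulo $2^k$) whose composite around the cycle is conjugation by $v^{2^k}$ on $S_0$, hence the identity. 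Now choose $r\ne1$ in the Sylow $2$-subgroup $Q\cap S_0$ of $S_0$ — which is nontrivial by the Feit--Thompson theorem — and regard $r$ as an element of $M$ supported on $S_0$. Since $r\in Q\cap M\le Q$, the element $v':=t(qr)$ again lies in $tQ$, so $(v')^{2^k}=1$; on the other hand $\rho(v')=\rho(v)$ because $r\in N$, so $v'$ has the same $2^k$-cycle, and conjugation by $v'$ induces the same isomorphisms $\alpha_i$ except that the one from $S_{2^k-1}$ to $S_0$ is post-composed with conjugation by $r$. Hence conjugation by $(v')^{2^k}$ on $S_0$ equals conjugation by $r$, a nontrivial automorphism of $S_0$ since $Z(S_0)=1$, contradicting $(v')^{2^k}=1$. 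Therefore $\rho(v)$ has order dividing $2^{k-1}$ for every $v\in tQ$, which is what was needed, completing the induction.

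The genuine obstacle is this last argument on the socle: the ``product around a cycle'' bookkeeping, and the two classical inputs it rests on — the Feit--Thompson theorem (a non-abelian finite simple group has even order) and the Schreier conjecture (its outer automorphism group is soluble), both consequences of the classification of finite simple groups. Everything else is routine: the reduction to the case of trivial soluble radical and the subadditivity of nonsoluble length along normal series.
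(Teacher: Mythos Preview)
The paper does not give a proof of this statement: it is quoted as an external result of Wilson \cite{wil83} and used as a black box. Your argument is correct and is essentially Wilson's original proof --- reduce modulo the soluble radical, pass to the permutation action of $K$ on the simple factors of the socle, and use the wreath-type ``product around a cycle'' computation (with Feit--Thompson to get a nontrivial $2$-element in a simple factor and Schreier to control $N/M$) to force the coset exponent to drop by a factor of $2$ in the quotient, closing the induction on~$|K|$.
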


It is natural to say that a profinite group $G$ has finite \textit{nonprosoluble length} at most $l$ if $G$ has a normal series \begin{equation*}
1=L_0\leqslant R_0 <  L_1\leqslant R_1<  \dots \leqslant R_{l}=G
\end{equation*}
in which each quotient $L_i/R_{i-1}$ is a (nontrivial) Cartesian product of non-abelian finite simple groups, and each quotient $R_i/L_{i}$ is prosoluble (possibly trivial).

 As a special case of a general result in Wilson's paper \cite{wil83} we have the following.

\begin{lemma}[{see \cite[Lemma~2]{wil83}}]\label{l-nsl}
If, for some positive integer $m$, all continuous finite quotients of a profinite group~$G$ have nonsoluble length at most $m$, then $G$ has finite nonprosoluble length at most~$m$.
\end{lemma}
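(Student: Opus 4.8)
The plan is to deduce Lemma~\ref{l-nsl} by a compactness argument applied not to any one canonically chosen series in the finite quotients of $G$ — canonical subgroups such as soluble radicals need not map onto one another under quotient maps, so that naive approach breaks down — but to the whole \emph{set of admissible series} in each finite quotient simultaneously.

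First I would fix, for each open normal subgroup $N\trianglelefteq G$, the set $X_N$ of all chains of normal subgroups
\[
1=A_0\le B_0\le A_1\le B_1\le \dots \le B_m=G/N
\]
in which each $A_i/B_{i-1}$ is a (possibly trivial) direct product of non-abelian finite simple groups and each $B_i/A_i$ is soluble (possibly trivial). Since $\lambda(G/N)\le m$ by hypothesis, the characteristic series realising $\lambda(G/N)$ can be relabelled and then padded with repeated copies of $G/N$ at the top to give an element of $X_N$; thus $X_N$ is a non-empty finite set. For $M\le N$ there is a transition map $X_M\to X_N$ sending a chain to the chain of images of its terms under $G/M\to G/N$. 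This is well defined: a quotient of a soluble group is soluble, and a quotient of a direct product of non-abelian finite simple groups $S_1\times\dots\times S_k$ is again such a product, because every normal subgroup of $S_1\times\dots\times S_k$ is a sub-product $\prod_{i\in I}S_i$. These maps are compatible, so $\{X_N\}$ is an inverse system of non-empty finite sets and $\varprojlim_N X_N\ne\varnothing$.

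Next I would select a thread $(A_i^N,B_i^N)_N\in\varprojlim_N X_N$. By definition of the inverse system, the image of $A_i^M$ under $G/M\to G/N$ is \emph{exactly} $A_i^N$ whenever $M\le N$ (and likewise for $B_i^N$); hence $\{A_i^N\}_N$ and $\{B_i^N\}_N$ are surjective inverse systems of finite groups, and I put $\mathcal A_i=\varprojlim_N A_i^N$ and $\mathcal B_i=\varprojlim_N B_i^N$. These are closed normal subgroups of $G$ with $\mathcal A_iN/N=A_i^N$ and $\mathcal B_iN/N=B_i^N$ for every $N$, so $1=\mathcal A_0\le\mathcal B_0\le\dots\le\mathcal B_m=G$ and
\[
\mathcal B_i/\mathcal A_i=\varprojlim_N B_i^N/A_i^N,\qquad \mathcal A_i/\mathcal B_{i-1}=\varprojlim_N A_i^N/B_{i-1}^N .
\]
The first is an inverse limit of finite soluble groups, hence prosoluble; the second is an inverse limit of finite direct products of non-abelian simple groups. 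Discarding the trivial factors among the $\mathcal A_i/\mathcal B_{i-1}$ and amalgamating the neighbouring prosoluble factors by means of Lemma~\ref{l-prosol-by-prosol} then yields a normal series witnessing that $G$ has nonprosoluble length at most $m$.

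The one step that requires genuine care is the structure of the factors $\mathcal A_i/\mathcal B_{i-1}$: one must check that a profinite group whose continuous finite quotients (along a cofinal family) are finite direct products of non-abelian simple groups is itself a Cartesian product of non-abelian finite simple groups. I would obtain this from the observation that any surjection between finite direct products of non-abelian simple groups is, after relabelling, a coordinate projection; thus the inverse system $\{A_i^N/B_{i-1}^N\}_N$ simply selects a collection of simple direct factors along the system, and its inverse limit is the Cartesian product of those factors that persist. (Alternatively this can be quoted from the standard theory of semisimple profinite groups.) Everything else is routine bookkeeping with inverse limits of closed normal subgroups.
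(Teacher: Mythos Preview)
The paper does not supply its own proof of this lemma; it simply quotes it from Wilson \cite[Lemma~2]{wil83}. Your inverse-limit (compactness) argument is correct and is the standard way to establish such a lifting result: taking the inverse limit over all admissible length-$m$ series in the finite quotients, rather than over any particular canonical series, is exactly the right move, and your handling of the one delicate point---that a surjective inverse system of finite semisimple groups has a Cartesian product of non-abelian finite simple groups as its limit---is sound.
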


We are now ready to prove the key proposition.

\begin{proposition}\label{pr-fnl}
Suppose that $G$ is a profinite group
in which the word $[x,{}_ny]$ has strictly less than $2^{\aleph_0}$ values. Then $G$ has finite nonprosoluble length.
\end{proposition}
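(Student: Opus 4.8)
The plan is to reduce everything to finite quotients and invoke Wilson's theorem on the nonsoluble length. By Lemma~\ref{l-nsl} it suffices to produce a constant $k=k(n)$ such that every continuous finite quotient $\bar G=G/M$ has nonsoluble length at most $k$; and by Theorem~\ref{t-wil83} it is enough, for each such $\bar G$, to exhibit a normal subgroup $\bar K$, a Sylow $2$-subgroup $\bar Q$ of $\bar K$, and an element $\bar t$ for which the coset $\bar t\bar Q$ has exponent dividing $2^{k}$, with $k$ independent of $M$. Thus the task is to manufacture a coset of bounded $2$-exponent from the hypothesis that $[x,{}_{n}y]$ has fewer than $2^{\aleph_{0}}$ values.

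The device for this is Lemma~\ref{l2}, applied to a $2$-element $g$ of $G$. One wants $g^{k}$ to be a nontrivial $2$-element: when $G$ contains an element generating a procyclic pro-$2$ subgroup one may fix $g$ once and for all, and in any event the parameter $k$ coming out of Lemma~\ref{l2} may be taken to be a power of $2$. Lemma~\ref{l2} supplies an open normal subgroup $N$, a coset $Nb$, and a power $g^{k}$ such that $g^{k}$ centralises the common value $s=[xb,{}_{n}g]$ for every $x\in N$. Passing to a finite quotient $\bar G=G/M$ with $M\leq N$, one has the same configuration with $\bar g$ a nontrivial $2$-element and $\bar g^{k}$ centralising $[\bar x\bar b,{}_{n}\bar g]$ for all $\bar x\in\bar N$. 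Combining this with the Sylow theory for profinite groups, the coprime-action and prosoluble machinery of the preceding sections, and the fact (Theorem~\ref{t2} and Proposition~\ref{pr-pro-p}) that the Sylow pro-$2$ subgroups of $G$ are locally nilpotent, one extracts, for a suitable normal subgroup $\bar K\trianglelefteq\bar G$, a coset of a Sylow $2$-subgroup of $\bar K$ of exponent dividing $2^{k}$. Since $k$ depends only on $g$ and $N$, it does not depend on $M$, and Lemma~\ref{l-nsl} then gives the conclusion. Much of this extraction runs parallel to the corresponding argument in \cite{khu-shu191}, with Lemma~2.7 there replaced by Lemma~\ref{l2} above.

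The hard part is exactly this last extraction: turning the pointwise information (that $[x,{}_{n}g]$ is constant on one coset, and that this constant is centralised by $g^{k}$) into the global statement that a whole coset of a Sylow $2$-subgroup of a normal subgroup has $2$-exponent at most $2^{k}$. This requires traversing the normal series that witnesses the nonsoluble length and performing a Hall--Higman--type analysis along it, as in the profinite part of \cite{khu-shu191}. Equivalently, one argues by contradiction: if the nonsoluble lengths of the finite quotients were unbounded, then, since Wilson's theorem would force the $2$-exponents of their Sylow $2$-subgroups to grow without bound, one could assemble $2^{\aleph_{0}}$ distinct values of $[x,{}_{n}y]$ from infinitely many mutually independent nonsoluble pieces by the subset-indexing construction used in the proof of Proposition~\ref{pr-height}. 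A recurring technical nuisance throughout is that ``fewer than $2^{\aleph_{0}}$ values'' is weaker than ``countably many'': the Baire Category Theorem is unavailable, so every such appeal must be routed through Proposition~\ref{pr-dks} (that is, through Lemmas~\ref{l1} and~\ref{l2}), and every countable union of ``small'' sets that arises is dealt with via the observation, already used in the proof of Proposition~\ref{pr-height}, that it still has cardinality less than $2^{\aleph_{0}}$.
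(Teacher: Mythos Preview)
Your proposal has a genuine gap, and it stems from a misreading of where the ``less than $2^{\aleph_0}$'' hypothesis bites. You assert that the Baire Category Theorem is unavailable and that everything must be routed through Lemma~\ref{l2}. In fact the paper's proof of this proposition uses Baire Category (Theorem~\ref{bct}) and does \emph{not} use Lemma~\ref{l2} at all. The point is that a Sylow $2$-subgroup $T$ of $G$ is locally nilpotent (by Theorem~\ref{t2}), so $T\times T$ is the \emph{countable} union of the closed sets $S_i=\{(x,y):\langle x,y\rangle\text{ is nilpotent of class }\leq i\}$; ordinary Baire gives cosets $aN$, $bN$ of an open $N\trianglelefteq T$ and a class bound $c$ with $\langle x,y\rangle$ nilpotent of class $\leq c$ for all $x\in aN$, $y\in bN$. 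The cardinality hypothesis is used only indirectly, via Theorem~\ref{t2} and Corollary~\ref{c-virt}.

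Your alternative starting point---Lemma~\ref{l2} applied to a single $2$-element $g$---yields only that $g^k$ centralises one constant $s=[xb,{}_ng]$ on one coset. This does not give a coset of a Sylow $2$-subgroup of bounded exponent, and you acknowledge that ``the hard part is exactly this last extraction'' without doing it. The paper's route is quite different: after Baire, it reduces to $G=[G,G]$, uses \cite[Lemma~2.8.15]{rib-zal} and Corollary~\ref{c-virt} to arrange that $a$ has finite order $2^m$, and then derives an Engel-type identity $[z,{}_c\,y^{2^{m(c-1)}}]=1$ valid for all $z\in N$ and $y\in bN$. This identity, applied to the permutation action of $\bar G$ on the simple factors of the generalised Fitting subgroup of $\bar K/R_0$ in a finite quotient, forces a fixed power of every $y\in \bar b\bar N$ to lie in $R_1$, which is exactly the coset-of-bounded-exponent input to Theorem~\ref{t-wil83}. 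Your contradiction sketch (unbounded nonsoluble length forces unbounded $2$-exponents, then ``assemble $2^{\aleph_0}$ values'') does not supply a mechanism for producing those values; Wilson's theorem concerns cosets, not Sylow subgroups themselves, and there is no evident way to manufacture continuum-many Engel values from growing coset-exponents alone.
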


\begin{proof}
Let $ H=\bigcap G^{(i)} $
be the intersection of the derived series of $G$.
Then $H=[H,H]$. Indeed,
if $H\ne [H,H]$, then the quotient $G/[H,H]$ is a prosoluble group by Lemma~\ref{l-prosol-by-prosol}, whence $\bigcap G^{(i)}=H\leq [H,H]$, a contradiction.
Since the quotient $G/H$ is prosoluble, it is sufficient to prove the proposition for $H$. Thus, we can assume from the outset that $G=[G,G]$.

Let $T$ be a Sylow $2$-subgroup of $G$. By Theorem~\ref{t2} the group $T$ is locally nilpotent. Consider the subsets of the direct product $T\times T$
$$
S_{i}=\{(x,y)\in T\times T\mid \text{the subgroup }\langle x,y\rangle\text{ is nilpotent of class at most }i\}.
$$
Note that each subset $S_{i}$ is closed in the product topology of $T\times T$, because the condition defining $S_i$ means that all commutators of weight $i+1$ in $x,y$ are trivial. Since every $2$-generator subgroup of $T$ is nilpotent, we have
$$
\bigcup _iS_{i}=T\times T.
$$
By
Theorem~\ref{bct} one of the sets $S_i$ contains an open subset of $T\times T$. This means that there are cosets $aN$ and $bN$ of an open normal subgroup $N$ of $T$ and a positive integer $c$  such that
\begin{equation}\label{e-2nilp}
\langle x,y\rangle\text{ is nilpotent of class }c\text{ for any }x\in aN,\; y\in bN.
\end{equation}

Let $K$ be an open normal subgroup of $G$ such that $K\cap T\leq N$. If we replace $N$ by $K\cap T$, then \eqref{e-2nilp} still holds with the same $a,b$. Hence we can assume that $N$ is a Sylow $2$-subgroup of $K$.

By \cite[Lemma 2.8.15]{rib-zal} there is a subgroup $H\leq G$ such that
$G=KH$ and $K\cap H$ is pronilpotent.
Since $H$ is virtually pronilpotent and every element has a countable Engel sink, by Corollary~\ref{c-virt} the subgroup $\gamma _\infty(H)$ is finite. Recalling our assumption that $G=[G,G]$, we obtain
$$
G=[G,G]=\gamma _{\infty}(G)\leq \gamma _{\infty}(HK)\leq \gamma _{\infty}(H)K.
$$
Thus, $G=\gamma _{\infty}(H)K$, where $\gamma _{\infty}(H)$ is a finite subgroup.

Hence we can choose the coset representative $a$ satisfying \eqref{e-2nilp}
in a conjugate of a Sylow $2$-subgroup of $\gamma _{\infty}(H)$, and therefore having finite order, say, $|a|=2^m$.

For any $y\in bN$ the $2$-subgroup $\langle a,y\rangle$ is nilpotent of class at most $c$, while  $a^{2^m}=1$. Then
\begin{equation}\label{e-ay2nc}
[a,y^{2^{m(c-1)}}]=1.
 \end{equation}
 This follows from well-known commutator formulae (and for any $p$-group); see, for example, \cite[Lemma~4.1]{shu00}.

In particular,  for any $z\in N$ by using \eqref{e-ay2nc} we obtain
\begin{equation}\label{e-2eng2}
[z,\,{}_c y^{2^{m(c-1)}}]=[az,\,{}_c y^{2^{m(c-1)}}]=1,
\end{equation}
since $\langle az,y^{2^{m(c-1)}}\rangle$ is  a subgroup of  $\langle az,y\rangle$, which is nilpotent of class $c$  by \eqref{e-2nilp}.

Our aim is to show that there is a uniform bound, in terms of $|G:K|$, $c$, and $m$,  for the nonsoluble length of all finite quotients of $G$ by open normal subgroups.
Let $M$ be an open normal subgroup of $G$ and let the bar denote the images in $\bar G=G/M$. It is clearly sufficient to obtain a required bound for the nonsoluble length of $\bar K$.

Let $R_0$ be the soluble radical of $\bar K$, and $L_1$ the inverse image of the generalized Fitting subgroup of $\bar K/R_0$, so that
\begin{equation}\label{e-soc}
L_1/R_0=S_1\times S_2\times \dots\times S_k
\end{equation}
is a direct product of non-abelian finite simple groups. Note that $R_0$ and $L_1$ are normal subgroups of $\bar G$. The group $\bar G$ acting by conjugation induces a permutational action on the set $\{S_1,S_2,\dots ,S_k\}$. The kernel of the restriction of this permutational action to $\bar K$ is contained in the inverse image $R_1$ of the soluble radical of $\bar K/L_1$:
\begin{equation}\label{e-soc2}
\bigcap _iN_{\bar K}(S_i)\leq R_1.
\end{equation}
This follows from the validity of Schreier's conjecture on the solubility of the outer automorphism groups of non-abelian finite simple groups, confirmed by the classification of the latter, because $L_1/R_0$ contains its centralizer in $\bar K/R_0 $.

Let $e$ be the least positive integer such that $2^{e}\geq c$, and let $t= 2^{m(c-1)+e}$. We claim that for any   $y\in \bar b\bar N$ the element  $y^{2^t}$ normalizes each factor $S_i$ in \eqref{e-soc}.
Arguing by contradiction, suppose that the element $y^{2^t}$ has a nontrivial orbit on the set of the  $S_i$. Then the element $y^{2^{m(c-1)}}$ has an orbit of length $2^s\geq  2^{e+1}$ on this set; let $\{T_1,T_2,\dots , T_{2^s}\}$ be such an orbit cyclically permuted by $y^{2^{m(c-1)}}$.
Since non-abelian finite simple groups have even order (by the Feit--Thompson theorem \cite{fei-tho}) and the subgroups $S_i$ are subnormal in $\bar K/R_0$, each subgroup $S_i$ contains a nontrivial element of $\bar NR_0/R_0$. If $x$ is a nontrivial element of $T_1\cap  \bar NR_0/R_0$, then the commutator
$$
[x,\,{}_c \bar y^{2^{m(c-1)}}],
$$
written as an element of $T_1\times T_2\times \dots\times  T_{2^s}$, has a nontrivial component in $T_{c+1}$ since $2^s\geq  2^{e+1}>c$. This, however, contradicts~\eqref{e-2eng2}.

Thus, for any element  $y\in \bar b\bar N$ the power $y^{2^t}$ normalizes each factor $S_i$ in \eqref{e-soc}. Let $2^{d}$ be the highest power of $2$ dividing $|G:K|$, and let $u=\max\{t,d\}$. Then $y^{2^u}\in R_1$ by \eqref{e-soc2}, since $y^{2^u}\in \bar K$ and $y^{2^u}$ normalizes each $S_i$ in \eqref{e-soc} by the choice of $u$.

As a result, in the quotient $\bar G/R_1$ all elements of the coset $\bar b\bar NR_1/R_1$ of the Sylow $2$-subgroup $\bar NR_1/R_1$ of $\bar K/R_1$ have exponent dividing $2^u$. We can now apply  Theorem~\ref{t-wil83}, by which the nonsoluble length of $\bar K/R_1$ is at most $u$. Then the  nonsoluble length of $\bar K$ is at most $u+1$. Clearly, the nonsoluble length of $\bar G/\bar K$ is bounded in terms of  $|G:K|$.
As a result, since the number $u$ depends only on $|G:K|$, $m$, and $c$, the nonsoluble length of $\bar G$ is bounded in terms of these parameters only. Since this  holds for any quotient of the profinite group $G$ by a normal open subgroup,  the group $G$ has finite nonprosoluble length by Lemma~\ref{l-nsl}. This completes the proof of Proposition~\ref{pr-fnl}.
\end{proof}

We are now ready to handle the general case of profinite groups using Corollary~\ref{c-virt} on virtually prosoluble groups and induction on the nonprosoluble length.
First we eliminate infinite Cartesian products of non-abelian finite simple groups.

\begin{lemma}\label{l-cart}
Suppose that $G$ is a profinite group that is a Cartesian product of non-abelian finite simple groups. If the word $[x,{}_ny]$ has strictly less than $2^{\aleph_0}$ values in $G$, then $G$ is finite.
\end{lemma}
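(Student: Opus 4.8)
The plan is to reduce to showing that only finitely many of the simple factors can be nontrivial, i.e. that $G$ is in fact a finite Cartesian product. Write $G=\prod_{i\in I}S_i$ with each $S_i$ a non-abelian finite simple group. Suppose for contradiction that $I$ is infinite. The idea is to exploit the fact that every non-abelian finite simple group $S$ is generated by conjugates of a single element and that the $n$-Engel word takes many values in $S$; more precisely, I would first record the elementary observation that in any non-abelian finite simple group $S$ there exist elements $x,y\in S$ with $[x,{}_ny]\ne 1$ (otherwise $S$ would be an $n$-Engel group, hence nilpotent by a theorem of Zorn, contradicting non-abelian simplicity; alternatively quote Theorem~\ref{t-wz} applied to $S$ as a finite, hence profinite, Engel group). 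Fix such $x_i,y_i\in S_i$ for each $i\in I$.

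\textbf{Producing continuum many values.} For any subset $J\subseteq I$ let $x^{(J)}\in G$ be the element with $i$-th component $x_i$ if $i\in J$ and $1$ otherwise, and similarly let $y^{(J)}$ have $i$-th component $y_i$ for $i\in J$ and $1$ otherwise. Since the Engel word is computed componentwise in a Cartesian product, $[x^{(J)},{}_n y^{(J)}]$ has $i$-th component $[x_i,{}_n y_i]\ne 1$ for $i\in J$ and trivial component for $i\notin J$. Hence distinct subsets $J\subseteq I$ yield distinct values of the $n$-Engel word in $G$. If $I$ is infinite it has $2^{\aleph_0}$ subsets, so the $n$-Engel word would take at least $2^{\aleph_0}$ values in $G$, contradicting the hypothesis. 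Therefore $I$ is finite, and $G=\prod_{i\in I}S_i$ is a finite group.

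\textbf{Main obstacle.} The only genuinely non-formal point is the first one: guaranteeing that in each non-abelian finite simple factor the $n$-Engel word is not identically trivial. I expect to handle this by the Engel-group argument sketched above (a non-abelian finite simple group cannot be an $n$-Engel group, since finite Engel groups are nilpotent), which is clean and requires no case analysis by the classification. Everything else is the ``independence across coordinates'' trick already used repeatedly in the paper (e.g. in the proof of Proposition~\ref{pr-height}), here applied in its simplest form, so no further machinery is needed.
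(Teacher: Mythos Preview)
Your argument is correct and in fact cleaner than the paper's. Both proofs share the same ``independence across coordinates'' counting idea, but they implement it differently.

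The paper fixes a single element $g=\prod_i g_i$ (with each $g_i$ having nontrivial smallest Engel sink in $G_i$), picks $s_i\in\mathscr E(g_i)$ with the cycling property $s_i=[s_i,{}_{k_i}g_i]$ from Lemma~\ref{l-min}, and for each subset $J\subseteq I$ shows that some long commutator $[s_J,{}_kg]$ lands in $\mathscr E(g)$ with support exactly $J$. This yields $|\mathscr E(g)|\ge 2^{\aleph_0}$, which is then contradicted by observing that $\mathscr E(g)$ can be taken to be the countable union $\bigcup_{i\in\N}\{[h,{}_{n+i}g]\mid h\in G\}$ of sets each of size $<2^{\aleph_0}$ (and $\mathrm{cf}(2^{\aleph_0})>\aleph_0$).

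You bypass the Engel-sink machinery entirely: rather than working with a single second variable and arbitrarily long commutators, you exhibit directly $2^{\aleph_0}$ values of the word $[x,{}_ny]$ itself by varying both coordinates, so no cardinal-arithmetic step is needed at the end. The only input you need is that a non-abelian finite simple group is not $n$-Engel, which is exactly what Zorn's (or Baer's) theorem gives. Your route is more elementary for this lemma; the paper's route has the virtue of reusing verbatim the Engel-sink template already set up in Proposition~\ref{pr-height}, at the cost of a slightly longer argument. One cosmetic point: ``has $2^{\aleph_0}$ subsets'' should read ``has at least $2^{\aleph_0}$ subsets'', but this does not affect the contradiction.
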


\begin{proof}
Suppose the opposite: then $G$ is a Cartesian product of infinitely many non-abelian finite simple groups $G_i$ over an infinite set of indices $i\in I$. Every subgroup $G_i$ contains an element $g_i\in G_i$ with a nontrivial smallest Engel sink $\mathscr E(g_i)\ne \{1\}$. (Actually, any nontrivial element of $G_i$ has a nontrivial Engel sink, since an Engel element of a finite group belongs to its Fitting subgroup by Baer's theorem \cite[Satz~III.6.15]{hup}.) By Lemma~\ref{l-min}, for any $s\in \mathscr E(g_i)$ we have $s=[s,{}_{k}g_i]$ for some $k\in {\mathbb N}$, and then also
\begin{equation}\label{e-cycl2}
s=[s,{}_{kl}g_i]\quad \text{for any}\;\, l\in {\mathbb N}.
\end{equation}

For every $i$, choose a nontrivial element $s_{i}\in \mathscr E(g_i)\subseteq G_i$.
For any subset $J\subseteq I$, consider the (infinite) product
$$
s_{J}=\prod _{j\in J} s_{j}.
$$
Let $$
g=\prod _{i\in I} g_{i}.
$$
If  $\mathscr E(g)$ is any Engel sink of $g$ in $G$, then for some $k\in {\mathbb N}$ the commutator $[s_{J},{}_kg]$ belongs to $\mathscr E(g)$. Because of the properties \eqref{e-cycl2}, all the components of $[s_{J},{}_kg]$ in the factors $G_j$ for $j\in J$ are nontrivial, while all the other components in $G_i$ for $i\not\in J$ are trivial by construction. Therefore for different subsets $J\subseteq I$ we thus obtain different elements of  $\mathscr E(g)$. The infinite set $I$ has at least $2^{\aleph_0}$ different subsets, whence $\mathscr E(g)$ has cardinality at least $2^{\aleph_0}$. But we can choose
$$\mathscr E(a)=\bigcup_{i\in \N}\{[g,{}_{n+i}a]\mid g\in G\},$$
which is a countable union of sets each having cardinality less than  $2^{\aleph_0}$ by hypothesis. This Engel sink $\mathscr E(a)$ therefore also has cardinality less than  $2^{\aleph_0}$, a contradiction. 
\end{proof}

We now finish the proof of Theorem~\ref{t1}, which also completes the proof of the main Theorem~\ref{t-main}, as explained in the introduction.

\begin{proof}[Proof of Theorem~\ref{t1}] Recall that $G$ is a profinite group in which the word $[x,{}_ny]$ has strictly less than $2^{\aleph_0}$ values. We need to show that $G$ has a finite normal subgroup $N$ such that $G/N$ is locally nilpotent.

By Proposition~\ref{pr-fnl} the group $G$ has finite nonprosoluble length $l$. This means that $G$ has a normal series
\begin{equation*}
1=L_0\leqslant R_0 <  L_1\leqslant R_1< L_1  \leqslant \dots \leqslant R_{l}=G
\end{equation*}
in which each quotient $L_i/R_{i-1}$ is a (nontrivial) Cartesian product of non-abelian finite simple groups, and each quotient $R_i/L_{i}$ is prosoluble (possibly trivial).
We argue by induction on $l$. When $l=0$, the group $G$ is prosoluble, and the result follows by Theorem~\ref{t3}.

Now let $l\geq 1$. By Lemma~\ref{l-cart} each of the nonprosoluble factors $L_i/R_{i-1}$ is finite. In particular, the subgroup $L_1$ is virtually prosoluble, and therefore $\gamma _{\infty}(L_1)$ is finite by Corollary~\ref{c-virt}. The quotient $R_1/ \gamma _{\infty}(L_1)$ is prosoluble by Lemma~\ref{l-prosol-by-prosol}. Hence the  nonprosoluble length of $G/\gamma _{\infty}(L_1)$ is $l-1$. By the induction hypothesis we obtain  that $\gamma _{\infty}(G/\gamma _{\infty}(L_1))$ is finite, and therefore $\gamma _{\infty}(G)$ is finite. By Theorem~\ref{t2} the quotient $G/\gamma _{\infty}(G)$ is locally nilpotent, and the proof is complete.
\end{proof}

 \section*{Acknowledgements}
 The second author was supported by FAPDF and CNPq-Brazil.

\end{document}